\newtheorem{thm}{Theorem}[section]
\newtheorem{cor}[thm]{Corollary}
\newtheorem{lem}[thm]{Lemma}
\newtheorem{prop}[thm]{Proposition}
\theoremstyle{definition}
\newtheorem{defn}[thm]{Definition}
\newtheorem{example}[thm]{Example}
\theoremstyle{remark}
\newtheorem{rem}[thm]{Remark}
\numberwithin{equation}{section}
\begin{document}

\title{The existence and uniqueness of almost periodic and asymptotically almost periodic solutions of semilinear Cauchy inclusions}

\author{Marko Kosti\' c}
\address{Faculty of Technical Sciences,
University of Novi Sad,
Trg D. Obradovi\' ca 6, 21125 Novi Sad, Serbia}
\email{marco.s@verat.net}

\begin{abstract}
The main aim of this paper is to investigate almost periodicity and asymptotic almost periodicity
of abstract semilinear Cauchy inclusions of first order with (asymptotically) Stepanov almost periodic coefficients. To achieve our goal, we employ fixed point theorems and the well known results on the generation of infinitely differentiable degenerate semigroups with removable singularites at zero. 
\end{abstract}

{\renewcommand{\thefootnote}{} \footnote{2010 {\it Mathematics
Subject Classification. 34G25, 47D03, 47D06, 47D99.
\\ \text{  }  \ \    {\it Key words and phrases.} Abstract semilinear Cauchy inclusions, Multivalued linear operators,  Almost periodicity, Asymptotic almost periodicity, Stepanov asymptotic almost periodicity.
\\  \text{  } The author is partially supported by grant 174024 of Ministry
of Science and Technological Development, Republic of Serbia.}}

\maketitle

\section{Introduction and preliminaries}

Almost periodic and asymptotically almost periodic solutions of differential equations in Banach spaces have been considered by many authors so far (for the basic information on the subject, we refer the reader to the monographs by D. N. Cheban \cite{cheban} and Y. Hino, T. Naito, N. V. Minh, J. S. Shin \cite{hino-bor}).
In the paper under review, we continue our recent research studies \cite{AOT}-\cite{EJDE} by enquiring into the existence of a unique almost periodic solution or a unique asymptotically almost periodic solution for a class
of abstract semilinear Cauchy inclusions of first order with (asymptotically) Stepanov almost periodic coefficients.
For this purpose, we introduce the class of asymptotically Stepanov almost periodic functions depending on two parameters and prove some new composition principles in this direction (see e.g. \cite{superposition}, \cite{comp-adv} and references therein). It seems that our main results, Theorem \ref{zeljezo}-Theorem \ref{stepa-frimex}, are new even for abstract semilinear non-degenerate differential equations with almost sectorial operators (\cite{pb1}-\cite{periago}).

The organization and main ideas of this paper can be briefly described as follows. In Proposition \ref{sew}, we reconsider the notion of an asymptotically almost periodic function depending on two parameters, while in Definition \ref{lot} we introduce the class of asymptotically Stepanov almost periodic two-parameter functions. A useful characterization
of this class is proved  in Lemma \ref{tricky-prim} following the ideas of W. M. Ruess, W. H. Summers \cite{RUESS} and  H. R. Henr\' iquez \cite{hernan1}. We open the second section of paper by proving some new composition principles for Stepanov almost periodic two-parameter functions and asymptotically Stepanov almost periodic two-parameter functions.
The main aim of Theorem \ref{vcb} is to clarify that the composition principle \cite[Theorem 2.2]{comp-adv}, proved by W. Long and S.-H. Ding, continues to hold for the functions defined on the real semi-axis $I =[0,\infty).$ The use of usual Lipschitz assumption has some advantages compared to the condition $f \in {\mathcal L}^{r}({\mathbb R} \times X : X)$ used in the formulation of the above-mentioned theorem since, in this case, we can include the order of (asymptotic) Stepanov almost periodicity $p =1$ in our analyses  (cf. Theorem \ref{vcb-prim} for more details). In Proposition \ref{bibl}-Proposition \ref{bibl-frimaonji}, we analyze composition principles for asymptotically Stepanov almost periodic two-parameter functions. The main aim of Lemma \ref{andrade} is to prove that the function defined through the infinite convolution product \eqref{wer} is asymptotically almost periodic provided that the operator family in its definition is exponentially decaying at infinity and has a removable singularity at zero, as well as that the coefficient $f(\cdot) $ is asymptotically Stepanov almost periodic. In the remaining part of paper, we examine the class of multivalued linear operators ${\mathcal A}$ satisfying the condition \cite[(P), p. 47]{faviniyagi} introduced by A. Favini and A. Yagi:
\begin{itemize} \index{removable singularity at zero}
\item[(P)]
There exist finite constants $c,\ M>0$ and $\beta \in (0,1]$ such that\index{condition!(PW)}
$$
\Psi:=\Psi_{c}:=\Bigl\{ \lambda \in {\mathbb C} : \Re \lambda \geq -c\bigl( |\Im \lambda| +1 \bigr) \Bigr\} \subseteq \rho({\mathcal A})
$$
and
$$
\| R(\lambda : {\mathcal A})\| \leq M\bigl( 1+|\lambda|\bigr)^{-\beta},\quad \lambda \in \Psi.
$$
\end{itemize}
The main goal of Theorem \ref{zeljezo}-Theorem \ref{zeljeznica} is to prove the existence of a unique almost periodic mild solution of the following semilinear differential inclusion of first order
\begin{align}\label{favini}
u^{\prime}(t)\in {\mathcal A}u(t)+f(t,u(t)),\quad t\in {\mathbb R},
\end{align}
where $f : {\mathbb R} \times X \rightarrow X$ is Stepanov almost periodic and some extra conditions are satisfied. Also, of concern is the following semilinear Cauchy inclusion of first order
\[
\hbox{(DFP)}_{f,s} : \left\{
\begin{array}{l}
u^{\prime}(t)\in {\mathcal A}u(t)+f(t,u(t)),\ t\geq 0,\\
\quad u(0)=u_{0}.
\end{array}
\right.
\]
In Theorem \ref{stepa}-Theorem \ref{stepa-frimex}, we analyze the existence of a unique asymptotically almost periodic solution of semilinear differential inclusion $(DFP)_{f,s}$ provided that the coefficient $f(\cdot,\cdot)$ behaves asymptotically in time as a Stepanov almost periodic function. Some simple consequences of Theorem \ref{stepa-frimex} are stated in Corollary \ref{kretinjo} and Corollary \ref{kret}. The main purpose of Remark \ref{brazilci}(i) is to explain how we can use the established results of ours with a view to prove a slight extension of  \cite[Theorem 4.4]{brazil}, one of the main results of investigation \cite{brazil} carried out by B. de Andrade and C. Lizama.
In Example \ref{merkle-nedegenerisane}, we present some applications to the abstract higher-order semilinear differential equations in  H\"older spaces, while in Example \ref{markec} we analyze the existence of a unique (asymptotically) almost periodic solution for semilinear Poisson heat equations in $L^{p}$-spaces.
The analysis of existence and uniqueness of pseudo-almost periodic solutions for a class of fractional Sobolev inclusions will be considered in our forthcoming paper \cite{klavir} (see \cite{ding}, \cite{fatajou} and \cite{lee} for some researches about Stepanov-like almost automorphic
solutions of abstract differential equations). 

We use the standard notation throughout the paper.
By $X$ we denote a complex Banach space. If $Y$ is also such a space, then by
$L(X,Y)$ we denote the space of all continuous linear mappings from $X$ into
$Y;$ $L(X)\equiv L(X,X).$ If $A$ is a linear operator
acting on $X,$
then the domain, kernel space and range of $A$ will be denoted by
$D(A),$ $N(A)$ and $R(A),$
respectively. 
By $C_{b}([0,\infty):X)$ we denote the space consisted of all bounded continuous functions from $[0,\infty)$ into $X;$ the symbol $C_{0}([0,\infty):X)$ denotes the closed subspace of $C_{b}([0,\infty):X)$ consisting of functions vanishing at infinity.  By $BUC([0,\infty):X)$ we denote the space consisted of all bounded uniformly continuous functions from $[0,\infty)$
to $X.$ This space becomes one of Banach's when equipped with the sup-norm. 

Given $s\in {\mathbb R}$ in advance, set $\lfloor s \rfloor :=\sup \{
l\in {\mathbb Z} : s\geq l \}$ and $\lceil s \rceil:=\inf \{
l\in {\mathbb Z} : s\leq l \}.$ The Gamma function is denoted by
$\Gamma(\cdot)$ and the principal branch is always used to take
the powers.

As it is well known, the notion of an almost periodic function was introduced by H. Bohr in 1925 and later generalized by many other mathematicians (cf. \cite{diagana}, \cite{gaston} and \cite{18} for more details on the subject).
Let $I={\mathbb R}$ or $I=[0,\infty),$ and let $f : I \rightarrow X$ be continuous. Given $\epsilon>0,$ we call $\tau>0$ an $\epsilon$-period for $f(\cdot)$ iff
$
\| f(t+\tau)-f(t) \| \leq \epsilon, $ $ t\in I.
$
The set constituted of all $\epsilon$-periods for $f(\cdot)$ is denoted by $\vartheta(f,\epsilon).$ It is said that $f(\cdot)$ is almost periodic, a.p. for short, iff for each $\epsilon>0$ the set $\vartheta(f,\epsilon)$ is relatively dense in $I,$ which means that
there exists $l>0$ such that any subinterval of $I$ of length $l$ meets $\vartheta(f,\epsilon)$. The space consisted of all almost periodic functions from the interval $I$ into $X$ will be denoted by $AP(I:X).$

The class of asymptotically almost periodic functions was introduced by M. Fr\' echet in 1941 (for more details concerning the vector-valued asymptotically almost periodic functions, see e.g. \cite{cheban},  \cite{diagana} and \cite{gaston}). A function $f \in C_{b}([0,\infty) : X)$ is said to be asymptotically almost periodic iff
for every $\epsilon >0$ we can find numbers $ l > 0$ and $M >0$ such that every subinterval of $[0,\infty)$ of
length $l$ contains, at least, one number $\tau$ such that $\|f(t+\tau)-f(t)\| \leq \epsilon$ for all $t \geq M.$
The space consisted of all asymptotically almost periodic functions from $[0,\infty)$ into $X$ will be denoted by
$AAP([0,\infty) : X).$ It is well known that for any function $f \in C([0,\infty):X),$ the following
statements are equivalent:
\begin{itemize}
\item[(i)] $f\in AAP([0,\infty) :X).$
\item[(ii)] There exist uniquely determined functions $g \in AP([0,\infty) :X)$ and $\phi \in  C_{0}([0,\infty): X)$
such that $f = g+\phi.$
\item[(iii)] The set $H(f):=\{f(\cdot +s) : s\geq 0\}$ is relatively compact in $C_{b}([0,\infty):X).$
\end{itemize}

Let $1\leq p <\infty.$ Then we say that a function $f\in L^{p}_{loc}(I :X)$ is Stepanov $p$-bounded, $S^{p}$-bounded shortly, iff
$$
\|f\|_{S^{p}}:=\sup_{t\in I}\Biggl( \int^{t+1}_{t}\|f(s)\|^{p}\, ds\Biggr)^{1/p}<\infty.
$$
The space $L_{S}^{p}(I:X)$ consisted of all $S^{p}$-bounded functions becomes a Banach space when equipped with the above norm. 
A function $f\in L_{S}^{p}(I:X)$ is said to be Stepanov $p$-almost periodic, $S^{p}$-almost periodic shortly, iff the function
$
\hat{f} : I \rightarrow L^{p}([0,1] :X),
$ defined by 
$$
\hat{f}(t)(s):=f(t+s),\quad t\in I,\ s\in [0,1]
$$
is almost periodic (cf. M. Amerio, G. Prouse \cite{amerio} for more details). 
It is said that $f\in  L_{S}^{p}([0,\infty): X)$ is asymptotically Stepanov $p$-almost periodic, asymptotically $S^{p}$-almost periodic shortly, iff $\hat{f} : [0,\infty) \rightarrow L^{p}([0,1]:X)$ is asymptotically almost periodic. By $APS^{p}([0,\infty) : X)$ and $AAPS^{p}([0,\infty) : X)$ we denote the classes consisting of all Stepanov $p$-almost periodic functions and asymptotically Stepanov $p$-almost periodic functions, respectively.

It is a well-known fact that if $f(\cdot)$ is an almost periodic (respectively, a.a.p.) function
then $f(\cdot)$ is also $S^p$-almost periodic (resp., $S^p$-a.a.p.) for $1\leq p <\infty.$ The converse statement is false, however.  

We need the assertion of \cite[Lemma 1]{hernan1}:

\begin{lem}\label{tricky}
Suppose that $ f: [0,\infty)\rightarrow X$ is an asymptotically $S^p$-almost periodic
function. Then there are two locally $p$-integrable functions $g: {\mathbb R} \rightarrow X$ and
$q: [0,\infty)\rightarrow X$ satisfying the following conditions:
\begin{itemize}
\item[(i)] $g$ is $S^p$-almost periodic,
\item[(ii)] $\hat{q}$ belongs to the class $C_{0}([0,1] : L^{p}([0,1]:X)),$
\item[(iii)] $f(t)=g(t)+q(t)$ for all $t\geq 0.$
\end{itemize}
Moreover, there exists an increasing sequence $(t_{n})_{n\in {\mathbb N}}$ of positive reals such that $\lim_{n\rightarrow \infty}t_{n}=\infty$
and $g(t)=lim_{n\rightarrow \infty}f(t+ t_{n})$ a.e. $t\geq 0.$
\end{lem}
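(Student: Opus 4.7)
The plan is to lift the classical Fréchet decomposition for asymptotically almost periodic functions (the equivalence (i)$\Leftrightarrow$(ii) recalled just above) from the Banach space $Y:=L^{p}([0,1]:X)$ down to the $X$-valued function $f$, and to extract $g$ from a suitable translation limit of $f$.

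First, by the very definition of asymptotic $S^{p}$-almost periodicity, the Bochner transform $\hat{f}:[0,\infty)\to Y$ lies in $AAP([0,\infty):Y)$. Applying (i)$\Leftrightarrow$(ii) in $Y$ yields a unique decomposition $\hat{f}=F+\Phi$ with $F\in AP([0,\infty):Y)$ and $\Phi\in C_{0}([0,\infty):Y)$. It is standard that any element of $AP([0,\infty):Y)$ extends uniquely to $\tilde{F}\in AP(\mathbb{R}:Y)$, so we have at our disposal an honestly two-sided almost periodic $Y$-valued function $\tilde{F}$.

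Next, by (i)$\Leftrightarrow$(iii), the orbit $H(\hat{f})$ is relatively compact in $C_{b}([0,\infty):Y)$. Since $\Phi(t)\to 0$ as $t\to\infty$, we may choose an increasing sequence $t_{n}\uparrow\infty$ such that
$$
\sup_{t\geq 0}\int_{0}^{1}\bigl\|f(t+s+t_{n})-F(t)(s)\bigr\|^{p}\,ds\longrightarrow 0\quad (n\to\infty).
$$
Fix $T>0$. For $n,m$ sufficiently large we have $t_{n},t_{m}>T+1$, and covering $[-T,T+1]$ by finitely many unit intervals $[k,k+1)$ and applying the above convergence on each of them (using $\tilde{F}$ rather than $F$ on the pieces for which $k-T<0$, which is legitimate because $\hat{f}(\cdot+t_{n})\to\tilde{F}$ also on every interval $[-T,\infty)$ once one has passed to the almost periodic extension) shows that $\{f(\cdot+t_{n})\}$ is Cauchy in $L^{p}([-T,T+1]:X)$. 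A standard diagonal extraction on $T\in\mathbb{N}$ produces, along a subsequence still denoted $(t_{n})$, a locally $p$-integrable function $g:\mathbb{R}\to X$ with
$$
g(u)=\lim_{n\to\infty}f(u+t_{n})\quad\text{for a.e.\ }u\in\mathbb{R}.
$$
By construction $\hat{g}(t)=\tilde{F}(t)$ in $Y$ for every $t\in\mathbb{R}$, hence $g\in APS^{p}(\mathbb{R}:X)$.

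Finally, put $q(t):=f(t)-g(t)$ for $t\geq 0$. Then $\hat{q}=\hat{f}-\hat{g}=(F+\Phi)-F=\Phi\in C_{0}([0,\infty):Y)$, which is what (ii) asserts (modulo the evident misprint ``$C_{0}([0,1]:\cdot)$'' in the statement), while (iii) and the a.e.\ identity $g(t)=\lim_{n\to\infty}f(t+t_{n})$ are built into the construction. The only delicate step is the one in the second paragraph: turning the \emph{uniform-in-$t\geq 0$} convergence of $\hat{f}(\cdot+t_{n})$ in the $Y$-norm into a \emph{locally $L^{p}$-in-$u\in\mathbb{R}$} Cauchyness of $f(\cdot+t_{n})$, which is where the two-sided extension $\tilde{F}$ and the relative compactness afforded by (iii) are both needed; the remaining bookkeeping is routine.
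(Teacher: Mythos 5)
The paper does not actually prove this lemma: it is quoted verbatim as \cite[Lemma 1]{hernan1} (Henr\'{\i}quez), so there is no in-text proof to compare against. Judged on its own, your argument is correct and is essentially the standard route one would expect (and the route the cited source follows in spirit): pass to the Bochner transform $\hat{f}$ in $Y=L^{p}([0,1]:X)$, apply the Fr\'echet decomposition there, and then pull the almost periodic part back down to an $X$-valued function. You correctly identify the one genuinely nontrivial point, namely that the $Y$-valued almost periodic component $F$ is not a priori the Bochner transform of anything (the decomposition in $Y$ need not respect the consistency relation $\hat{f}(t)(s)=\hat{f}(t')(s')$ for $t+s=t'+s'$), and you resolve it properly by realizing $g$ as an a.e.\ translation limit $g(u)=\lim_{n}f(u+t_{n})$ along a sequence of $1/n$-periods of $F$ tending to infinity, which forces $\hat{g}=\tilde{F}$. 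The two small points you state without justification are both standard and true: that such a sequence $(t_{n})$ exists with $\sup_{t\geq 0}\|\hat{f}(t+t_{n})-F(t)\|_{Y}\to 0$ (take $1/n$-periods of $F$ in $[n,\infty)$ and absorb $\Phi$), and that approximate periods of $F$ on $[0,\infty)$ remain approximate periods of $\tilde{F}$ on all of ${\mathbb R}$ (this is exactly the sup-norm identity \cite[(4.24)]{a43} that the paper itself invokes in Proposition \ref{sew}), which is what legitimizes the Cauchy estimate on intervals $[k,k+1]$ with $k<0$. You are also right that ``$C_{0}([0,1]:L^{p}([0,1]:X))$'' in item (ii) is a misprint for $C_{0}([0,\infty):L^{p}([0,1]:X))$. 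I see no gap.
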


By $C_{0}([0,\infty) \times Y : X)$ we denote the space of all
continuous functions $h : [0,\infty)  \times Y \rightarrow X$ such that $\lim_{t\rightarrow \infty}h(t, y) = 0$ uniformly for $y$ in any compact subset of $Y .$ A continuous function $f : I  \times Y  \rightarrow X$ is called uniformly continuous on
bounded sets, uniformly for $t \in I$ iff for every $	\epsilon > 0$ and every bounded subset $K$ of $Y$ there
exists a number $\delta_{\epsilon,K }> 0$ such that $\|f(t , x)- f (t, y)\| \leq \epsilon$ for all $ t \in I$ and all $x,\ y \in K$ satisfying that $\|x-y\|\leq \delta_{\epsilon,K }.$ If $f : I  \times Y  \rightarrow X,$ then we define $\hat{f} : I  \times Y  \rightarrow L^{p}([0,1]:X)$ by $\hat{f}(t , y):=f(t +\cdot, y),$ $t\geq 0,$ $y\in Y.$

For the purpose of research of (asymptotically) almost periodic properties of solutions to semilinear Cauchy inclusions, we need to remind ourselves of the following well-known definitions and results (see e.g. C. Zhang \cite{zhang-c-prim}, W. Long, S.-H. Ding \cite{comp-adv}, and Proposition \ref{sew} below):

\begin{defn}\label{definicija}
Let $1\leq p <\infty.$ 
\begin{itemize}
\item[(i)]
A function $f : I \times Y \rightarrow X$ is called almost periodic iff $f (\cdot, \cdot)$ is bounded, continuous as well as for every $\epsilon>0$ and every compact
$K\subseteq Y$ there exists $l(\epsilon,K) > 0$ such that every subinterval $J\subseteq I$ of length $l(\epsilon,K)$ contains a number $\tau$ with the property that $\|f (t +\tau , y)- f (t, y)\| \leq \epsilon$ for all $t \in  I,$ $ y \in K.$ The collection of such functions will be denoted by $AP(I \times Y : X).$
\item[(ii)] A function $f : [0,\infty)  \times Y \rightarrow X$ is said to be asymptotically almost periodic iff it is bounded continuous and admits a
decomposition $f = g + q,$ where $g \in AP([0,\infty)  \times Y : X)$ and $q\in C_{0}([0,\infty)  \times Y : X).$ Denote by
 $AAP([0,\infty)  \times Y : X) $ the vector space consisting of all such functions.
\item[(iii)] A function $f : I \times Y \rightarrow X$ is called Stepanov $p$-almost periodic, $S^{p}$-almost periodic shortly, iff $\hat{f} : I  \times Y  \rightarrow L^{p}([0,1]:X)$ is almost periodic.
\end{itemize}
\end{defn}

\begin{lem}\label{suljpa}
\begin{itemize}
\item[(i)]
Let $f \in AP(I \times Y : X)$ and $h \in AP(I : Y ).$ Then the mapping $t\mapsto f (t,h(t)),$ $t\in I$  belongs to the space $ AP(I:X).$ 
\item[(ii)] Let $f \in AAP([0,\infty) \times Y : X)$ and $h \in AAP([0,\infty) : Y ).$ Then the mapping $t\mapsto f (t,h(t)),$ $t\geq 0$ belongs to the space $AAP([0,\infty) : X).$
\end{itemize}
\end{lem}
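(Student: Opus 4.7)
For part (i), the natural route is the classical ``$\epsilon/2$ + common almost periods'' argument. First I would verify two preparatory facts. (a) The range $h(I)$ is relatively compact in $Y$, so $K:=\overline{h(I)}$ is compact; this is a standard consequence of almost periodicity of $h$ (from every sequence of translates of $h$ one can extract a uniformly convergent subsequence, so $h(I)$ is totally bounded). (b) For the fixed compact set $K$, the function $f$ is uniformly continuous in its second variable, uniformly in $t \in I$: given $\epsilon>0$ there is $\delta=\delta(\epsilon)>0$ with $\|f(t,y_1)-f(t,y_2)\|\le \epsilon/2$ for all $t\in I$ and all $y_1,y_2\in K$ with $\|y_1-y_2\|\le \delta$. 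Fact (b) is a standard consequence of the defining property of $f\in AP(I\times Y:X)$: the $\epsilon$-translation property on $I\times K$ together with the continuity of $f$ on a single ``fundamental'' compact strip forces uniform continuity on $I\times K$. Next I would invoke the classical result that for any two almost periodic functions (here the one-parameter function $h$, and the $Y$-parametrized family $f(\cdot,\cdot)\big|_{I\times K}$ viewed through its translation property on the compact $K$) the set of \emph{common} $\eta$-almost periods is relatively dense. Choosing $\eta$ small enough ($\le \min\{\delta,\epsilon/2\}$) yields a relatively dense set of numbers $\tau$ for which both
\[
\|f(t+\tau,y)-f(t,y)\|\le \epsilon/2,\qquad t\in I,\ y\in K,
\]
and $\|h(t+\tau)-h(t)\|\le \delta$ for all $t\in I$. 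Then the triangle inequality
\[
\|f(t+\tau,h(t+\tau))-f(t,h(t))\|\le \|f(t+\tau,h(t+\tau))-f(t,h(t+\tau))\|+\|f(t,h(t+\tau))-f(t,h(t))\|
\]
bounds each summand by $\epsilon/2$, proving that $t\mapsto f(t,h(t))$ is almost periodic. Continuity and boundedness of the composition are immediate.

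For part (ii) I would use the canonical decompositions $f=g+q$ with $g\in AP([0,\infty)\times Y:X)$ and $q\in C_{0}([0,\infty)\times Y:X)$, and $h=h_{1}+h_{2}$ with $h_{1}\in AP([0,\infty):Y)$ and $h_{2}\in C_{0}([0,\infty):Y)$. Then split
\[
f(t,h(t))=g(t,h_{1}(t))+\bigl[g(t,h(t))-g(t,h_{1}(t))\bigr]+q(t,h(t)).
\]
The first term is almost periodic by part (i). The third term lies in $C_{0}([0,\infty):X)$: the set $\overline{h([0,\infty))}$ is compact (as $h$ is a.a.p., hence has relatively compact range), and by definition of $C_{0}([0,\infty)\times Y:X)$ the function $q(t,y)$ tends to $0$ as $t\to\infty$ uniformly for $y$ in compact subsets of $Y$. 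For the middle (``cross'') term, I would use that $h(t)-h_{1}(t)=h_{2}(t)\to 0$, combined with uniform continuity of $g$ in its second variable, uniformly in $t$, on a fixed compact set $K_{0}\supseteq \overline{h([0,\infty))}\cup \overline{h_{1}([0,\infty))}$; this gives $\|g(t,h(t))-g(t,h_{1}(t))\|\to 0$. Summing, $f(\cdot,h(\cdot))$ is the sum of an almost periodic function and an element of $C_{0}([0,\infty):X)$, hence lies in $AAP([0,\infty):X)$.

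The main obstacle I anticipate is not the triangle-inequality bookkeeping but the two ``uniform continuity on $I\times K$'' statements invoked above (for $f$ in part (i), for $g$ in part (ii)), which have to be extracted from the bare definition of almost periodicity of a two-parameter function. The standard route is: pick a relatively dense set of $\epsilon$-translates, reduce the estimate to a single compact slab $[a,a+l(\epsilon,K)]\times K$ where uniform continuity is automatic from continuity of $f$ on a compact set, and then propagate across all of $I\times K$ by the translation property. The other delicate point is the relative denseness of common almost periods for two almost periodic objects of different ``types'' (a $Y$-indexed family and a $Y$-valued function); this is handled by regarding both as almost periodic mappings into appropriate metric spaces and applying the classical simultaneous-approximation theorem.
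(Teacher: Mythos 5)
Your argument is correct, but note that the paper does not actually prove Lemma \ref{suljpa} at all: it is quoted as a known result, with references to C. Zhang \cite{zhang-c-prim} and W. Long, S.-H. Ding \cite{comp-adv}, and the author's only comment on its proof is the remark following Proposition \ref{sew} that part (i) for $I=[0,\infty)$ is ``very simple to deduce'' by first extending $f(\cdot,y)$ and $h(\cdot)$ to the whole real line via Proposition \ref{sew} and \cite[Proposition 4.7.1]{a43} and then invoking the known composition result on ${\mathbb R}$ (\cite[Lemma 2.6]{brazil}). Your proposal instead gives the direct, self-contained classical proof (relative compactness of $h(I)$, uniform continuity of $f$ in the second variable uniformly in $t$ on the compact $\overline{h(I)}$, common $\eta$-almost periods, triangle inequality), and your decomposition in part (ii) into $g(t,h_{1}(t))$ plus two $C_{0}$-remainders is exactly the standard one. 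The two routes buy different things: the paper's route offloads all the delicate half-line issues onto Proposition \ref{sew}, whereas your direct argument must handle them on $I=[0,\infty)$ itself. Concretely, on the half-line the $\epsilon$-periods $\tau$ are nonnegative, so both the propagation step in your uniform-continuity claim (b) and the simultaneous-almost-period step require either the shift $\tau\in[t-l-1,\,t-1]$ trick (possible only for $t$ large, with the small $t$ handled by compactness of $[0,l+2]$) or a preliminary extension to ${\mathbb R}$; you flag this as the delicate point and your sketch of how to resolve it is the right one, so I regard the proof as complete in substance. One further small point worth making explicit in part (ii): the relative compactness of the range of an asymptotically almost periodic function (needed for both the $q(t,h(t))$ term and the cross term) is exactly \cite[Theorem 2.4]{zhang-c-prim}, which the paper uses elsewhere for the same purpose.
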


In Definition \ref{definicija}(ii), a great number of authors assumes a priori that $g \in AP({\mathbb R} \times Y : X).$ This is slightly redundant on account of the following proposition:

\begin{prop}\label{sew}
Let $f : [0,\infty) \times Y \rightarrow X,$ and let $S\subseteq Y.$ Suppose that, for every $\epsilon>0,$ there exists $l(\epsilon,S) > 0$ such that every subinterval $J\subseteq [0,\infty)$ of length $l(\epsilon,S)$ contains a number $\tau$ with the property that $\|f (t +\tau , y)- f (t, y)\| \leq \epsilon$ for all $t \geq 0,$ $ y \in S$ (this, in particular, holds provided that $f\in AP(I \times Y : X)$). 
Denote by $F (t, y)$ the unique almost periodic extension of function $f (t, y)$ from the interval $[0,\infty)$ to the whole real line, for fixed $ y \in S$
(cf. \cite[Proposition 4.7.1]{a43}).
Then, for every $\epsilon>0,$ with the same $l(\epsilon,S) > 0$ chosen as above, we have that every subinterval $J\subseteq {\mathbb R}$ of length $l(\epsilon,S)$ contains a number $\tau$ with the property that $\|F (t +\tau , y)- F (t, y)\| \leq \epsilon$ for all $t \in {\mathbb R},$ $ y \in S.$
\end{prop}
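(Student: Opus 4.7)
The plan is to establish a transfer lemma first: if $\tau\geq 0$ satisfies $\|f(t+\tau,y)-f(t,y)\|\leq \epsilon$ for all $t\geq 0$ and $y\in S$, then the same inequality holds with $f$ replaced by $F$ for all $t\in\mathbb R$ and $y\in S$. Combining this with the symmetry of the uniform $\epsilon$-periods of $F$ and a case analysis on the location of $J$ yields the claim with the same $l(\epsilon,S)$.

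For the transfer lemma, I fix $y\in S$ and $t\in\mathbb R$. The case $t\geq 0$ is immediate, since $F(\cdot,y)=f(\cdot,y)$ on $[0,\infty)$ and $t+\tau\geq 0$. For $t<0$, I invoke the individual almost periodicity of $F(\cdot,y)$ on $\mathbb R$ (guaranteed by \cite[Proposition 4.7.1]{a43}): given $\delta>0$, its $\delta$-periods are relatively dense in $\mathbb R$ and in particular unbounded above, so I can choose $s>0$ with $t+s\geq 0$, $t+\tau+s\geq 0$, and $\sup_{u\in\mathbb R}\|F(u+s,y)-F(u,y)\|\leq \delta$. The triangle inequality
\[
\|F(t+\tau,y)-F(t,y)\|\leq \|F(t+\tau,y)-F(t+\tau+s,y)\|+\|f(t+\tau+s,y)-f(t+s,y)\|+\|F(t+s,y)-F(t,y)\|
\]
bounds the outer terms by $\delta$ and the middle term by $\epsilon$ (via the hypothesis applied at the nonnegative time $t+s$). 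Letting $\delta\downarrow 0$ finishes the step; uniformity in $y\in S$ is preserved because $\tau$ and the bound $\epsilon$ do not depend on the auxiliary shift $s$.

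To complete the proof, I observe that the set $\vartheta_{S}(F,\epsilon):=\{\tau\in\mathbb R:\sup_{t\in\mathbb R,\,y\in S}\|F(t+\tau,y)-F(t,y)\|\leq \epsilon\}$ is symmetric about the origin, since the substitution $t\mapsto t-\tau$ converts a bound for $\tau$ into a bound for $-\tau$. Given an interval $J\subseteq\mathbb R$ of length $l(\epsilon,S)$, I distinguish three cases. If $J\subseteq[0,\infty)$, the hypothesis provides $\tau\in J$ which the transfer lemma lifts to $\mathbb R$. If $J\subseteq(-\infty,0]$, the reflected interval $-J\subseteq[0,\infty)$ has the same length, so the hypothesis yields $\sigma\in -J$ satisfying the bound on $[0,\infty)$; then transfer plus symmetry make $\tau:=-\sigma\in J$ a uniform $\epsilon$-period of $F$ on $\mathbb R$. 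If $J$ meets both half-lines, then $0\in J$ and $\tau=0$ works trivially.

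I expect the main obstacle to be the transfer lemma itself, since lifting a one-sided uniform $\epsilon$-period to a two-sided one depends on the fully two-sided almost periodicity of $F(\cdot,y)$ for each fixed $y\in S$, which is precisely what the cited extension result supplies. Once the lemma is in place, the symmetry observation and the three-case split are essentially bookkeeping.
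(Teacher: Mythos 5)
Your proof is correct, and its core ingredient coincides with the paper's: both arguments rest on the fact that a $\tau\geq 0$ which is a uniform $\epsilon$-period of $f(\cdot,y)$ on $[0,\infty)$, uniformly in $y\in S$, is automatically a uniform $\epsilon$-period of the extension $F(\cdot,y)$ on all of ${\mathbb R}$. The paper obtains this by citing the identity $\|h\|_{\infty}=\sup_{s\geq M}\|h(s)\|$, valid for almost periodic $h$ (equation (4.24) of the Arendt et al.\ reference), applied to $h(s)=F(s+\tau,y)-F(s,y)$; your transfer lemma reproves exactly this identity from scratch by shifting a negative $t$ into $[0,\infty)$ with a large $\delta$-almost period of $F(\cdot,y)$ and letting $\delta\downarrow 0$ --- equivalent content, just self-contained. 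Where you genuinely diverge is in locating an $\epsilon$-period inside an interval $J$ not contained in $[0,\infty)$: you use the symmetry $\tau\in\vartheta_{S}(F,\epsilon)\Leftrightarrow -\tau\in\vartheta_{S}(F,\epsilon)$ together with the three-case split ($J\subseteq[0,\infty)$, $J\subseteq(-\infty,0]$, $0\in J$), whereas the paper translates $J$ rightward by $\tau_{0}+|a|$, picks $\tau'$ in the translated interval, and takes $\tau:=\tau'-\tau_{0}-|a|\in J$. Your route is actually the safer one: after the substitution $t\mapsto t-\tau_{0}-|a|$, the paper's displayed chain bounds $\|F(t+\tau',y)-F(t,y)\|$ for all $t\in{\mathbb R}$, i.e.\ it certifies $\tau'$ (which need not lie in $J$) rather than the translated number $\tau$ as a global $\epsilon$-period, and since $\vartheta_{S}(F,\epsilon)$ is not translation-invariant this does not immediately yield the claim for $\tau$. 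Your symmetry-plus-case-analysis argument closes exactly that step cleanly and still delivers the conclusion with the same $l(\epsilon,S)$.
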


\begin{proof}
Let $\epsilon>0$ be given in advance, let $l(\epsilon,S) > 0$ be as above, and let $J=[a,b]\subseteq {\mathbb R}$. The assertion is clear provided that $a\geq 0.$ Suppose now that $a<0;$ then we choose a number $\tau_{0}>0$ arbitrarily. Then there exists $\tau'\in J=[\tau_{0},\tau_{0}+b-a]\subseteq  [0,\infty)$ such that $\|f (t +\tau' , y)- f (t, y)\| \leq \epsilon$ for all $t \geq 0,$ $ y \in S.$ Since $\tau:=\tau'-\tau_{0}-|a|\in J,$ it suffices to show that $\|F (t +\tau , y)- F(t, y)\| \leq \epsilon$ for all $t \in {\mathbb R},$ $ y \in S.$
Towards this end, fix a number $t \in {\mathbb R}$ and an element $y \in S.$ Since the mapping $s\mapsto F(s+\tau'-\tau_{0}-|a|, y)-F(s-\tau_{0}-|a|, y), $ $s\in {\mathbb R}$ 
is almost periodic, the equation \cite[(4.24)]{a43} shows that
\begin{align*}
\bigl\| F(t& +\tau'-\tau_{0}-|a|, y)-F(t-\tau_{0}-|a|, y)\bigr\| 
\\ &\leq \bigl\| F(\cdot+\tau'-\tau_{0}-|a|, y)-F(\cdot-\tau_{0}-|a|, y)\bigr\|_{\infty}
\\ & =\sup_{s\geq \tau_{0}+|a|}\bigl\| F(s+\tau'-\tau_{0}-|a|, y)-F(s-\tau_{0}-|a|, y)\bigr\|
\\  & = \sup_{s\geq \tau_{0}+|a|}\bigl\| f(s+\tau'-\tau_{0}-|a|, y)-f(s-\tau_{0}-|a|, y)\bigr\| 
\\ & = \sup_{s\geq 0}\bigl\| f(s+\tau', y)-f(s, y)\bigr\| \leq \epsilon.
\end{align*}
This ends the proof of proposition.
\end{proof}

It is very simple to deduce Lemma \ref{suljpa}(i) with $I =[0,\infty)$ by using  Proposition \ref{sew} and the corresponding result in the case that $I ={\mathbb R}$ (see e.g. \cite[Lemma 2.6]{brazil}). Definition \ref{definicija}(iii) seems to be new for $I =[0,\infty),$ and slightly different from the corresponding notion introduced in \cite[Definition 1.8]{comp-adv}, given in the case that $I ={\mathbb R}.$
Observe also that we automatically assume the boundedness of function $f (\cdot, \cdot)$ in the parts (i) and (ii) of Definition \ref{definicija}, following the approach used in \cite{zhang-c-prim}.

By \cite[Theorem 2.6]{zhang-c-prim}, we have that a bounded continuous function $f : [0,\infty)  \times Y \rightarrow X$ is  asymptotically almost periodic iff for every $\epsilon>0$ and every compact
$K\subseteq Y$ there exist $l(\epsilon,K) > 0$ and $M(\epsilon,K) > 0$  such that every subinterval $J\subseteq [0,\infty)$ of length $l(\epsilon,K)$ contains a number $\tau$ with the property that $\| f (t +\tau , y)- f (t, y)\| \leq \epsilon$ for all $t \geq  M(\epsilon,K),$ $ y \in K.$ We
introduce the notion of an asymptotically Stepanov $p$-almost periodic function $f (\cdot, \cdot)$ as follows:

\begin{defn}\label{lot}
Let $1\leq p <\infty.$ A function $f : [0,\infty)  \times Y \rightarrow X$
is said to be asymptotically $S^p$-almost periodic
iff $\hat{f}: [0,\infty)  \times Y \rightarrow  L^{p}([0,1]:X)$ is asymptotically almost periodic. The collection of such functions will be denoted by $AAPS^{p}([0,\infty) \times Y : X).$
\end{defn}

It is very elementary to prove that any asymptotically almost periodic function is also asymptotically Stepanov $p$-almost periodic
($1\leq p <\infty$). Now we state the following two-variable analogue of Lemma \ref{tricky}:

\begin{lem}\label{tricky-prim}
Suppose that $ f: [0,\infty) \times Y \rightarrow X$ is an asymptotically $S^p$-almost periodic
function. Then there are two functions $g: {\mathbb R} \times Y \rightarrow X$ and
$q: [0,\infty) \times Y \rightarrow X$ satisfying that for each $y\in Y$ the functions
$g(\cdot,y)$ and 
$q(\cdot,y)$ are locally $p$-integrable, as well as that
the following holds:
\begin{itemize}
\item[(i)] $\hat{g} : {\mathbb R} \times Y \rightarrow L^{p}([0,1]:X)$ is almost periodic,
\item[(ii)] $\hat{q} \in C_{0}([0,\infty) \times Y : L^{p}([0,1]:X)),$
\item[(iii)] $f(t,y)=g(t,y)+q(t,y)$ for all $t\geq 0$ and $y\in Y.$
\end{itemize}
Moreover, for every compact set $K \subseteq Y,$ there exists an increasing sequence $(t_{n})_{n\in {\mathbb N}}$ of positive reals such that $\lim_{n\rightarrow \infty}t_{n}=\infty$
and $g(t,y)=lim_{n\rightarrow \infty}f(t+ t_{n},y)$ for all $ y\in Y$  and a.e. $t\geq 0.$
\end{lem}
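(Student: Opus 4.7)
The plan is to lift the asymptotically almost periodic decomposition of $\hat{f}$ back to $f$ itself. By applying Definition \ref{definicija}(ii) to $\hat{f}$, I would first write $\hat{f} = F_{1} + F_{2}$ where $F_{1} \in AP([0,\infty) \times Y : L^{p}([0,1]:X))$ and $F_{2} \in C_{0}([0,\infty) \times Y : L^{p}([0,1]:X))$. Proposition \ref{sew}, applied on each compact $K \subseteq Y$ to the almost periodic function $F_{1}(\cdot, y)$, then yields an extension $\tilde{F}_{1} : {\mathbb R} \times Y \to L^{p}([0,1]:X)$ whose $\epsilon$-periods are relatively dense uniformly for $y$ in any compact set.

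Next I would construct a genuine function $g : {\mathbb R} \times Y \to X$ such that, for each $y$, $\hat{g}(t, y) = \tilde{F}_{1}(t, y)$ in $L^{p}([0,1]:X)$ for every $t \in {\mathbb R}$. Following the idea used in the proof of Lemma \ref{tricky} (see \cite{hernan1}), I fix a compact $K \subseteq Y$ and an increasing sequence $s_{n} \to \infty$; the asymptotic almost periodicity of $\hat{f}$, uniform in $y \in K$, produces a subsequence $(t_{n})$ along which $\hat{f}(\cdot + t_{n}, y)$ is Cauchy in $L^{p}([0,1]:X)$, uniformly in $t \geq 0$ and $y \in K$. By Fubini this forces $f(\cdot + t_{n}, y)$ to be Cauchy in $L^{p}_{loc}([0,\infty):X)$ for every such $y$; a further diagonal extraction over a countable dense subset of $[0, \infty) \times K$ produces a.e.\ pointwise limits, and one sets $g(t, y) := \lim_{n \to \infty} f(t + t_{n}, y)$. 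Exhausting $Y$ by an increasing sequence of compacts and passing to one more diagonal subsequence yields $g$ on $[0, \infty) \times Y$; extension to ${\mathbb R} \times Y$ is handled by the analogous translate mechanism using $\tilde{F}_{1}$, and Fubini then gives $\hat{g} = \tilde{F}_{1}$.

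Finally, I would set $q(t, y) := f(t, y) - g(t, y)$ for $t \geq 0$ and $y \in Y$. By construction $\hat{q} = \hat{f} - \hat{g} = F_{2} \in C_{0}([0,\infty) \times Y : L^{p}([0,1]:X))$, which together with the properties of $g$ secured above establishes (i)--(iii). Local $p$-integrability of $g(\cdot, y)$ and $q(\cdot, y)$ is inherited from that of $f(\cdot, y)$ together with the a.e.\ pointwise convergence used to define $g$.

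The main obstacle is the middle step: promoting the family of $L^{p}([0,1]:X)$-equivalence classes $\tilde{F}_{1}(t, y)$ to a genuinely pointwise-defined function $g(t, y)$ satisfying the consistency relation $g(t + s, y) = \tilde{F}_{1}(t, y)(s)$ for a.e.\ $s \in [0, 1]$ and every $t$. In the one-variable setting this is carried out in \cite{hernan1}; the two-variable adaptation requires performing the diagonal/Fubini extraction uniformly over an exhaustion of $Y$ by compact sets, and independence of the limit function $g$ from the extracting subsequence follows from the uniqueness of the almost periodic part in the scalar asymptotically almost periodic decomposition applied at each point of a countable determining set.
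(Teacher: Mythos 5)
Your overall strategy coincides with the paper's: decompose $\hat{f}=G+Q$ via Definition \ref{definicija}(ii), then lift the decomposition back to $f$ itself. The divergence, and the problem, is in how you carry out the lifting. You propose to rebuild $g$ from scratch by a diagonal/Fubini extraction ``over an exhaustion of $Y$ by an increasing sequence of compacts,'' but $Y$ is an arbitrary Banach space and is not $\sigma$-compact unless it is finite-dimensional, so this exhaustion does not exist in general and the global diagonal argument cannot be run as you describe it. You also correctly flag the consistency issue of promoting the equivalence classes $\tilde F_{1}(t,y)\in L^{p}([0,1]:X)$ to a pointwise-defined $g(t,y)$ with $g(t+s,y)=\tilde F_{1}(t,y)(s)$, but you leave it as an acknowledged obstacle rather than resolving it.

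The paper's proof avoids both difficulties by not redoing the construction at all: for each \emph{fixed} $y\in Y$ it applies the already-established one-variable Lemma \ref{tricky} to the function $f(\cdot,y)$ (a singleton is compact, so no exhaustion of $Y$ is needed), obtaining $g(\cdot,y)$ and $q(\cdot,y)$ with $\hat{g}(\cdot,y)$ almost periodic and $\hat{q}(\cdot,y)\in C_{0}$; the uniqueness of the decomposition ``almost periodic plus vanishing at infinity'' then forces $\hat{g}(\cdot,y)=G(\cdot,y)$ and $\hat{q}(\cdot,y)=Q(\cdot,y)$, so properties (i) and (ii) --- including the uniformity over compact subsets of $Y$ --- are inherited directly from $G\in AP([0,\infty)\times Y: L^{p}([0,1]:X))$ (extended to ${\mathbb R}$ via Proposition \ref{sew}) and $Q\in C_{0}([0,\infty)\times Y: L^{p}([0,1]:X))$. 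The ``moreover'' clause about the sequence $(t_{n})$ comes from the proof of \cite[Theorem 2.6]{zhang-c-prim} for each compact $K$. You do invoke uniqueness at the very end, which is the right instinct; if you restructure your argument so that uniqueness does the gluing across $y$ (instead of a diagonal extraction over a nonexistent compact exhaustion), your proof becomes essentially the paper's.
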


\begin{proof}
By the foregoing, we have that $\hat{f} : [0,\infty)  \times Y \rightarrow X$ is bounded continuous and admits a
decomposition $\hat{f }= G + Q,$ where $G \in AP([0,\infty)  \times Y : L^{p}([0,1]:X))$ and $Q\in C_{0}([0,\infty)  \times Y : L^{p}([0,1]:X)).$ Moreover, the proof of \cite[Theorem 2.6]{zhang-c-prim} shows that, for every compact set $K \subseteq Y,$ there exists an increasing sequence $(t_{n})_{n\in {\mathbb N}}$ of positive reals such that $\lim_{n\rightarrow \infty}t_{n}=\infty$
and $G(t,y)=\lim_{n\rightarrow \infty}\hat{f}(t+ t_{n},y)$ for all $ y\in Y$  and $t\geq 0.$ The remaining part of proof follows by applying Lemma \ref{tricky} to the function $\hat{f }(\cdot,y),$ for fixed element $y\in Y,$ and the uniqueness of decomposition $g(\cdot)+q(\cdot)$ in this lemma.
\end{proof}

For the theory of abstract degenerate differential equations, we refer the reader to the monographs by
R. W. Carroll, R. W. Showalter \cite{carol}, A. Favini, A. Yagi \cite{faviniyagi}, 
I. V. Melnikova, A. I. Filinkov \cite{me152} and M. Kosti\'c \cite{FKP}.
In what follows, we will present a brief overview of definitions from the theory of multivalued linear operators in Banach spaces. 

Suppose that $X$ and $Y$ are Banach spaces.
Let us recall that a multivalued map (multimap) ${\mathcal A} : X \rightarrow P(Y)$ is said to be a multivalued
linear operator (MLO) iff the following holds:
\begin{itemize}
\item[(i)] $D({\mathcal A}) := \{x \in X : {\mathcal A}x \neq \emptyset\}$ is a linear subspace of $X$;
\item[(ii)] ${\mathcal A}x +{\mathcal A}y \subseteq {\mathcal A}(x + y),$ $x,\ y \in D({\mathcal A})$
and $\lambda {\mathcal A}x \subseteq {\mathcal A}(\lambda x),$ $\lambda \in {\mathbb C},$ $x \in D({\mathcal A}).$
\end{itemize}
If $X=Y,$ then we say that ${\mathcal A}$ is an MLO in $X.$

The fundamental equality used below says that, if $x,\ y\in D({\mathcal A})$ and $\lambda,\ \eta \in {\mathbb C}$ with $|\lambda| + |\eta| \neq 0,$ then 
$\lambda {\mathcal A}x + \eta {\mathcal A}y = {\mathcal A}(\lambda x + \eta y).$ Assuming ${\mathcal A}$ is an MLO, then ${\mathcal A}0$ is a linear submanifold of $Y$
and ${\mathcal A}x = f + {\mathcal A}0$ for any $x \in D({\mathcal A})$ and $f \in {\mathcal A}x.$ Set $R({\mathcal A}):=\{{\mathcal A}x :  x\in D({\mathcal A})\}.$
Then the set ${\mathcal A}^{-1}0 = \{x \in D({\mathcal A}) : 0 \in {\mathcal A}x\}$ is called the kernel
of ${\mathcal A}$ and it is denoted by either $N({\mathcal A})$ or Kern$({\mathcal A}).$ The inverse ${\mathcal A}^{-1}$ of an MLO is defined by
$D({\mathcal A}^{-1}) := R({\mathcal A})$ and ${\mathcal A}^{-1} y := \{x \in D({\mathcal A}) : y \in {\mathcal A}x\}$.
It can be simply checked that ${\mathcal A}^{-1}$ is an MLO in $X,$ as well as that $N({\mathcal A}^{-1}) = {\mathcal A}0$
and $({\mathcal A}^{-1})^{-1}={\mathcal A};$ ${\mathcal A}$ is said to be injective iff ${\mathcal A}^{-1}$ is
single-valued. 

For any mapping ${\mathcal A}: X \rightarrow P(Y)$ we define $\check{{\mathcal A}}:=\{(x,y) : x\in D({\mathcal A}),\ y\in {\mathcal A}x\}.$ Then ${\mathcal A}$ is an MLO iff $\check{{\mathcal A}}$ is a linear relation in $X\times Y,$ i.e., iff $\check{{\mathcal A}}$ is a linear subspace of $X \times Y.$ 

Assume that ${\mathcal A},\ {\mathcal B} : X \rightarrow P(Y)$ are two MLOs. Then we define its sum ${\mathcal A}+{\mathcal B}$ by $D({\mathcal A}+{\mathcal B}) := D({\mathcal A})\cap D({\mathcal B})$ and $({\mathcal A}+{\mathcal B})x := {\mathcal A}x +{\mathcal B}x,$ $x\in D({\mathcal A}+{\mathcal B}).$
It is clear that ${\mathcal A}+{\mathcal B}$ is likewise an MLO.

Let ${\mathcal A} : X \rightarrow P(Y)$ and ${\mathcal B} : Y\rightarrow P(Z)$ be two MLOs, where $Z$ is an SCLCS. The product of operators ${\mathcal A}$
and ${\mathcal B}$ is defined by $D({\mathcal B}{\mathcal A}) :=\{x \in D({\mathcal A}) : D({\mathcal B})\cap {\mathcal A}x \neq \emptyset\}$ and
${\mathcal B}{\mathcal A}x:=
{\mathcal B}(D({\mathcal B})\cap {\mathcal A}x).$ Then ${\mathcal B}{\mathcal A} : X\rightarrow P(Z)$ is an MLO and
$({\mathcal B}{\mathcal A})^{-1} = {\mathcal A}^{-1}{\mathcal B}^{-1}.$ The scalar multiplication of an MLO ${\mathcal A} : X\rightarrow P(Y)$ with the number $z\in {\mathbb C},$ $z{\mathcal A}$ for short, is defined by
$D(z{\mathcal A}):=D({\mathcal A})$ and $(z{\mathcal A})(x):=z{\mathcal A}x,$ $x\in D({\mathcal A}).$ It is clear that $z{\mathcal A}  : X\rightarrow P(Y)$ is an MLO and $(\omega z){\mathcal A}=\omega(z{\mathcal A})=z(\omega {\mathcal A}),$ $z,\ \omega \in {\mathbb C}.$

Assume now that ${\mathcal A}$ is an MLO in $X.$
Then
the resolvent set of ${\mathcal A},$ $\rho({\mathcal A})$ for short, is defined as the union of those complex numbers
$\lambda \in {\mathbb C}$ for which
\begin{itemize}
\item[(i)] $X= R(\lambda-{\mathcal A})$;
\item[(ii)] $(\lambda - {\mathcal A})^{-1}$ is a single-valued bounded operator on $X.$
\end{itemize}
The operator $\lambda \mapsto (\lambda -{\mathcal A})^{-1}$ is called the resolvent of ${\mathcal A}$ ($\lambda \in \rho({\mathcal A})$); $R(\lambda : {\mathcal A})\equiv  (\lambda -{\mathcal A})^{-1}$  ($\lambda \in \rho({\mathcal A})$).
The basic properties of resolvent sets of single-valued linear operators continue to hold in our framework  (\cite{faviniyagi}, \cite{FKP}).

For the notions of various types of degenerate regularized solution operator families subgenerated by multivalued linear operators, 
we refer the reader to \cite{FKP}. 

\section{Almost periodic and asymptotically almost periodic solutions of abstract semilinear Cauchy inclusions}\label{prckovic}

Composition theorems for two-parameter Stepanov $p$-almost periodic functions have been considered in \cite[Theorem 2.2]{comp-adv}. We start this section by investigating composition theorems for Stepanov two-parameter almost periodic and asymptotically Stepanov two-parameter almost periodic functions.

The following result states that the assertion of \cite[Theorem 2.2]{comp-adv} continues to hold for the functions defined on the real semi-axis $I =[0,\infty).$ The proof of theorem is similar to that of afore-mentioned and therefore omitted.

\begin{thm}\label{vcb} 
Suppose that the following conditions hold:
\begin{itemize}
\item[(i)] $f \in APS^{p}(I \times X : X)  $ with  $p > 1, $ and there exist a number  $ r\geq \max (p, p/p -1)$ and a function $ L_{f}\in L_{S}^{r}(I) $ such that:
\begin{align}\label{vbnmp}
\| f(t,x)-f(t,y)\| \leq L_{f}(t)\|x-y\|,\quad t\in I,\ x,\ y\in X;
\end{align}
\item[(ii)] $x \in APS^{p} (I: X),$ and there exists a set $E \subseteq I$ with $m (E)= 0$ such that
$ K :=\{x(t) : t \in I \setminus E\}$
is relatively compact in $X;$ here, $m(\cdot)$ denotes the Lebesgue measure.
\end{itemize}
Then $q:=pr/p+r \in [1, p)$ and $f(\cdot, x(\cdot)) \in APS^{q}(I : X).$
\end{thm}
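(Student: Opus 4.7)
The plan is to transcribe the argument of Long and Ding \cite[Theorem 2.2]{comp-adv} line by line, replacing ${\mathbb R}$ by $I=[0,\infty)$ and verifying that every invocation of almost periodicity only ever needs one-sided $\epsilon$-periods, which are provided by Definition \ref{definicija}(iii) as stated on $I=[0,\infty)$. The very first step is the arithmetic identity $1/q=1/p+1/r$, which follows from $q=pr/(p+r)$. Since $r\geq p/(p-1)$ we get $1/r \leq 1-1/p$, hence $1/q \leq 1$, i.e.\ $q\geq 1$; and $r<\infty$ (in fact any positive $r$) gives $q<p$. This secures the range of $q$ and makes the Hölder exponent $(r/q,p/q)$ conjugate admissible for the key estimate below.

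Next I would fix $\epsilon>0$ and build a common set of $\epsilon$-periods for three objects simultaneously: $\hat{x}\in AP(I:L^p([0,1]:X))$, the uniform $\epsilon$-period on compact subsets of $X$ for $\hat{f}\in AP(I\times X:L^p([0,1]:X))$, and $\hat{L}_f$, which is $S^r$-bounded (so $\|\hat{L}_f(\cdot)\|_{L^r[0,1]}\in L^{\infty}(I)$ with bound $\|L_f\|_{S^r}$). Using the relative compactness of $K$ in $X$, extract a finite $\delta$-net $\{x_1,\ldots,x_N\}\subseteq K$, where $\delta$ will be chosen at the end in terms of $\epsilon$, $\|L_f\|_{S^r}$ and $\|\hat{x}\|_{\infty}$. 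The compact set $\widetilde K:=\overline{K}\cup\{x_1,\ldots,x_N\}$ then supplies, via the hypothesis $\hat{f}\in APS^p(I\times X:X)$, a common length $l=l(\epsilon,\widetilde K)$ so that every subinterval of $I$ of length $l$ contains a number $\tau$ satisfying $\|\hat{f}(t+\tau,y)-\hat{f}(t,y)\|_{L^p[0,1]}\leq\epsilon$ uniformly in $t\in I$ and $y\in\widetilde K$, and simultaneously $\|\hat{x}(t+\tau)-\hat{x}(t)\|_{L^p[0,1]}\leq\epsilon$.

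The core estimate then splits, for each $t\in I$, as
\begin{align*}
\bigl\|f(t+\cdot+\tau,x(t+\cdot+\tau))-f(t+\cdot,x(t+\cdot))\bigr\|_{L^q[0,1]}
&\leq A(t)+B(t)+C(t),
\end{align*}
where $A(t)$ pairs $L_f(t+\cdot+\tau)$ with $\|x(t+\cdot+\tau)-x(t+\cdot)\|$ (handled by Hölder with exponents $r/q$ and $p/q$, yielding $\|L_f\|_{S^r}\cdot\epsilon$); $B(t)$ is the piece $\|f(t+\cdot+\tau,x(t+\cdot))-f(t+\cdot,x(t+\cdot))\|_{L^q[0,1]}$; and $C(t)$ is the error made by replacing $x(t+s)$ by the nearest $x_j$ (and back again after applying the almost-periodicity of $\hat{f}$), again dominated by a Hölder product of $L_f$ and $\delta$. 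For $B(t)$, I would partition $[0,1]$ into the measurable sets $B_j^t=\{s\in[0,1]:\|x(t+s)-x_j\|<\delta,\ j=\min\{i:\|x(t+s)-x_i\|<\delta\}\}$ and, on each $B_j^t$, dominate by the $\epsilon$-period property of $\hat{f}$ at the point $x_j\in\widetilde K$; the resulting $L^q$-norm is controlled by $N^{1/q}\epsilon$ (possibly after a further Hölder step from $L^p$ to $L^q$, which is lossless thanks to $q\leq p$).

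The main obstacle is precisely $B(t)$: the partition argument is indispensable because $\hat{f}$ is only uniformly almost periodic on compacta, so the translation cannot be applied at the varying point $x(t+s)$ directly; one must replace $x(t+s)$ by a member of the finite $\delta$-net, control the replacement error via the Stepanov $L^r$-norm of $L_f$, apply the $\epsilon$-period at the net point, and then reverse the replacement. The resulting estimate $\|f(\cdot+\tau,x(\cdot+\tau))-f(\cdot,x(\cdot))\|_{S^q}\leq C\epsilon$, with $C$ depending only on $\|L_f\|_{S^r}$, $N$, and an absolute constant, shows that $\tau$ is an $S^q$-almost period of $f(\cdot,x(\cdot))$; the relative density of such $\tau$ in $I$ is inherited from the choice of $l$. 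The integrability $f(\cdot,x(\cdot))\in L_S^q(I:X)$ needed for the conclusion to make sense is established separately by the same Hölder split applied to $\|f(s,x(s))\|\leq\|f(s,x_1)\|+L_f(s)\|x(s)-x_1\|$, using that $f(\cdot,x_1)\in L_S^p(I:X)\subseteq L_S^q(I:X)$ because $q\leq p$.
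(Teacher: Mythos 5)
Your proposal is correct and takes essentially the same route as the paper, which omits the proof precisely because it is the Long--Ding argument of \cite[Theorem 2.2]{comp-adv} transported verbatim to $I=[0,\infty)$: the verification $q=pr/(p+r)\in[1,p)$ from $1/q=1/p+1/r$, the H\"older split with conjugate exponents $r/q$ and $p/q$, and the $\delta$-net partition handling the term $B(t)$ are exactly the ingredients being invoked. The only point worth flagging is that you should justify (via Proposition \ref{sew} or the Bochner-type argument) that the three relatively dense sets of $\epsilon$-periods can indeed be intersected into a single relatively dense set on the half-line, but this is standard and does not affect the argument.
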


As observed in \cite[Remark 2.5]{ding-prim}, the condition \eqref{vbnmp} seems to be more conventional for dealing with than
the usual Lipschitz assumption. But, then we cannot consider the value $p =1$ in Theorem \ref{vcb}: this is not the case if we accept
the existence of a Lipschitz constant $L>0$ such that
\begin{align}\label{vbnmp-frim}
\| f(t,x)-f(t,y)\| \leq L\|x-y\|,\quad t\in I,\ x,\ y\in X.
\end{align}
Speaking-matter-of-factly, an insignificant modification of the proof of \cite[Theorem 2.2]{comp-adv} shows that the following result holds true:

\begin{thm}\label{vcb-prim} 
Suppose that the following conditions hold:
\begin{itemize}
\item[(i)] $f \in APS^{p}(I \times X : X)  $ with  $p \geq 1, $ $L>0$ and \eqref{vbnmp-frim} holds.
\item[(ii)] $x \in APS^{p}(I:X),$ and there exists a set $E \subseteq I$ with $m (E)= 0$ such that
$ K =\{x(t) : t \in I \setminus E\}$
is relatively compact in $ X.$
\end{itemize}
Then $f(\cdot, x(\cdot)) \in APS^{p}(I : X).$
\end{thm}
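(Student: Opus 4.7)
The plan is to verify the defining property of $APS^{p}(I:X)$ directly for the composition. Writing $F(t):=f(t,x(t))$, I want to show that $\hat{F}: I \to L^{p}([0,1]:X)$ is almost periodic. The starting idea is the standard splitting
\begin{align*}
\hat{F}(t+\tau)(s) - \hat{F}(t)(s) &= \bigl[f(t+\tau+s, x(t+\tau+s)) - f(t+\tau+s, x(t+s))\bigr] \\
&\quad + \bigl[f(t+\tau+s, x(t+s)) - f(t+s, x(t+s))\bigr].
\end{align*}
The $L^{p}([0,1]:X)$-norm of the first bracket is dominated, via the Lipschitz bound \eqref{vbnmp-frim}, by $L\,\|\hat{x}(t+\tau)-\hat{x}(t)\|_{L^{p}([0,1]:X)}$, which is small whenever $\tau$ is an $S^{p}$-almost period of $x$.

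For the second bracket I would exploit the relatively compact range of $x$. Let $\overline{K}$ denote the closure of $K$ in $X$ and, given $\eta>0$, fix a finite $\eta$-net $\{x_{1},\dots,x_{n(\eta)}\}\subseteq \overline{K}$. Since $m(E)=0$, we have $x(t+s)\in\overline{K}$ for a.e.\ $s\in[0,1]$, so a measurable selection yields an index $j(t,s)\in\{1,\dots,n(\eta)\}$ with $\|x(t+s)-x_{j(t,s)}\|<\eta$. Adding and subtracting $f(t+\tau+s,x_{j(t,s)})$ and $f(t+s,x_{j(t,s)})$, and applying \eqref{vbnmp-frim} twice, gives the pointwise estimate
\[
\bigl\|f(t+\tau+s, x(t+s)) - f(t+s, x(t+s))\bigr\| \leq 2L\eta + \bigl\|f(t+\tau+s, x_{j(t,s)}) - f(t+s, x_{j(t,s)})\bigr\|.
\]
Raising to the $p$th power, integrating over $s\in[0,1]$, and using the disjoint partition $[0,1]=\bigcup_{j=1}^{n(\eta)}\{s:j(t,s)=j\}$, the last contribution is controlled by
\[
\Biggl(\sum_{j=1}^{n(\eta)}\bigl\|\hat{f}(t+\tau,x_{j})-\hat{f}(t,x_{j})\bigr\|_{L^{p}([0,1]:X)}^{p}\Biggr)^{1/p},
\]
which is small provided $\tau$ is a common $S^{p}$-almost period of $f(\cdot,x_{j})$ for $j=1,\dots,n(\eta)$.

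To conclude, given $\epsilon>0$, first pick $\eta>0$ with $2L\eta<\epsilon/3$, which fixes $n=n(\eta)$; then pick $\delta>0$ with $L\delta<\epsilon/3$ and $n^{1/p}\delta<\epsilon/3$. Invoking the almost periodicity of $\hat{x}$ together with the almost periodicity of $\hat{f}$ on the compact set $\{x_{1},\dots,x_{n}\}\subseteq X$, a standard Bochner-type argument shows that the intersection of the corresponding sets of $\delta$-almost periods is still relatively dense in $I$; any $\tau$ from this intersection yields $\|\hat{F}(t+\tau)-\hat{F}(t)\|_{L^{p}([0,1]:X)}\leq\epsilon$ uniformly in $t\in I$.

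The one delicate point I expect is the measurable choice of $j(t,s)$ and the care needed to check that the null set $E$ in hypothesis (ii) does not obstruct the integration over $s\in[0,1]$; both are routine by a measurable-selection argument. Otherwise the scheme is precisely that of \cite[Theorem 2.2]{comp-adv}, with the H\"older-type step replaced by the cleaner estimate coming from the uniform Lipschitz constant $L$. It is exactly this replacement which permits the case $p=1$ and which avoids the loss of exponent encountered in Theorem \ref{vcb}.
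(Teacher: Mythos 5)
Your proposal is correct and follows essentially the route the paper intends: the paper gives no written proof, stating only that Theorem \ref{vcb-prim} follows by an ``insignificant modification'' of the proof of \cite[Theorem 2.2]{comp-adv}, and your argument is exactly that scheme (two-term splitting, Lipschitz bound \eqref{vbnmp-frim} for the first term, finite $\eta$-net of the relatively compact range plus common $\delta$-almost periods for the second), with the H\"older step of \cite{comp-adv} replaced by the uniform constant $L$ so that $p=1$ is allowed and no exponent is lost. The points you flag as delicate (measurable selection of $j(t,s)$, the null set $E$, and the relative density of the intersection of the almost-period sets) are indeed the standard ones and are handled correctly.
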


Concerning asymptotically two-parameter Stepanov $p$-almost periodic functions, we can prove the following composition principle (cf. Lemma \ref{tricky} and Lemma \ref{tricky-prim}; the use of symbol $q$ is clear from the context):

\begin{prop}\label{bibl}
Let $I =[0,\infty).$
Suppose that the following conditions hold:
\begin{itemize}
\item[(i)] $g \in APS^{p}(I \times X : X)  $ with  $p > 1, $ and there exist a number  $ r\geq \max (p, p/p -1)$ and a function $ L_{g}\in L_{S}^{r}(I:X) $ such that \eqref{vbnmp} holds with the function $f(\cdot, \cdot)  $ replaced by the function $g(\cdot, \cdot)  $ therein.
\item[(ii)] $y \in APS^{p}(I:X),$ and there exists a set $E \subseteq I$ with $m (E)= 0$ such that
$ K =\{y(t) : t \in I \setminus E\}$
is relatively compact in X.
\item[(iii)] $f(t,x)=g(t,x)+q(t,x)$ for all $t\geq 0$ and $x\in X,$ where $\hat{q}\in C_{0}([0,\infty) \times X : L^{q}([0,1]:X))$
and $q:=pr/p+r.$
\item[(iv)] $x(t)=y(t)+z(t) $ for all $t\geq 0,$ where $\hat{z}\in C_{0}([0,\infty) : L^{p}([0,1]:X)).$
\item[(v)]  There exists a set $E' \subseteq I$ with $m (E')= 0$ such that
$ K' =\{x(t) : t \in I \setminus E'\}$
is relatively compact in $ X.$
\end{itemize}
Then $q\in [1, p)$ and $f(\cdot, x(\cdot)) \in AAPS^{q}(I : X).$
\end{prop}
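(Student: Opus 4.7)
My plan is to use the decomposition
\begin{align*}
f(t, x(t)) = g(t, y(t)) + \bigl[g(t, x(t)) - g(t, y(t))\bigr] + q(t, x(t)),
\end{align*}
inherited from (iii) and (iv), and to show that the first term lies in $APS^{q}(I : X)$ while the remaining two have $C_{0}$-Stepanov-$q$ hats. Summing these contributions will then place $\widehat{f(\cdot, x(\cdot))}$ in $AP([0,\infty): L^{q}([0,1]:X)) + C_{0}([0,\infty):L^{q}([0,1]:X))$, which is exactly $f(\cdot, x(\cdot)) \in AAPS^{q}(I:X)$ by Definition \ref{lot} together with the two-variable characterization underlying Lemma \ref{tricky-prim}. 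The inclusion $q = pr/(p+r) \in [1,p)$ is immediate from $r \geq p/(p-1)$, since $1/q = 1/p + 1/r \leq 1$ and $1/q > 1/p$.

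For the leading almost periodic part, hypotheses (i) and (ii) are tailored exactly to Theorem \ref{vcb} applied to the pair $(g, y)$, delivering $g(\cdot, y(\cdot)) \in APS^{q}(I:X)$. For the Lipschitz correction, combining (i) with (iv) gives, a.e.\ $u \geq 0$,
\begin{align*}
\bigl\|g(u, x(u)) - g(u, y(u))\bigr\| \leq L_{g}(u) \|z(u)\|,
\end{align*}
so H\"older's inequality on $[t, t+1]$ with conjugate exponents $r/q$ and $p/q$ (the identity $q/r + q/p = 1$ being $1/q = 1/p + 1/r$) yields
\begin{align*}
\int_{t}^{t+1}\bigl\|g(u, x(u)) - g(u, y(u))\bigr\|^{q}\,du \leq \|L_{g}\|_{S^{r}}^{q}\,\bigl\|\widehat{z}(t)\bigr\|_{L^{p}}^{q},
\end{align*}
whose right-hand side tends to $0$ as $t \to \infty$ because $\widehat{z} \in C_{0}([0,\infty):L^{p}([0,1]:X))$. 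This is the desired Stepanov-$q$-ergodic hat for the Lipschitz correction.

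The remaining summand $q(\cdot, x(\cdot))$ is where the main obstacle lies. My plan is to exploit (v): since $\overline{K'}$ is compact in $X$ and $\widehat{q} \in C_{0}([0,\infty) \times X : L^{q}([0,1]:X))$ tends to $0$ as $t \to \infty$ uniformly on $\overline{K'}$, the extension $\widetilde{\widehat{q}} : [0, \infty] \times \overline{K'} \to L^{q}([0,1]:X)$ obtained by setting $\widetilde{\widehat{q}}(\infty, x) = 0$ is continuous on the compact product $[0,\infty] \times \overline{K'}$, hence uniformly continuous there. Given $\varepsilon > 0$, this produces a $\delta > 0$, finitely many centres $x_{1}, \ldots, x_{N}$ with $\overline{K'} \subseteq \bigcup_{j} B(x_{j}, \delta)$, and a threshold $M > 0$, such that $\|\widehat{q}(t, x_{j})\|_{L^{q}} < \varepsilon$ for all $j$ and $t \geq M$, while $\|\widehat{q}(t, x) - \widehat{q}(t, x_{j})\|_{L^{q}} < \varepsilon$ for $x \in B(x_{j}, \delta) \cap \overline{K'}$ uniformly in $t \geq 0$. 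Then I partition $[0,1]$ via the measurable sets $A_{j}(t) = \{s : x(t+s) \in B(x_{j}, \delta) \setminus \bigcup_{i<j} B(x_{i}, \delta)\}$, apply the inequality $\|q(t+s, x(t+s))\|^{q} \leq 2^{q-1}(\|q(t+s, x(t+s)) - q(t+s, x_{j})\|^{q} + \|q(t+s, x_{j})\|^{q})$ on each $A_{j}(t)$, and bound $\int_{0}^{1}\|q(t+s, x(t+s))\|^{q}\,ds$ by a constant multiple of $N\varepsilon^{q}$ for $t \geq M$. The genuine technical difficulty, which is the step I anticipate needing the most care, is reconciling the $L^{q}$-integrated continuity of $\widehat{q}(t, \cdot)$ with the pointwise-in-$s$ substitution error on each $A_{j}(t)$, where $x(t+s)$ varies with $s$; the compactness extension above is precisely what makes this control uniform across the finite sub-cover and is the one place where the two relative-compactness hypotheses really bite.
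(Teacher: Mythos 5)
Your decomposition, your application of Theorem \ref{vcb} to the pair $(g,y)$, and your H\"older estimate with exponents $r/q$ and $p/q$ for the Lipschitz correction are exactly the paper's proof, and those two parts are correct. The problem is the third term, and the difficulty you flag at the end is not a matter of extra care: it cannot be resolved from the stated hypotheses. Your covering argument controls $\int_{A_{j}(t)}\|q(t+s,x_{j})\|^{q}\,ds$ through $\|\hat{q}(t,x_{j})\|_{L^{q}}$, but the substitution error $\int_{A_{j}(t)}\|q(t+s,x(t+s))-q(t+s,x_{j})\|^{q}\,ds$ is a diagonal evaluation that the $L^{q}$-valued continuity of $\hat{q}(t,\cdot)$ simply does not see: the bound $\|\hat{q}(t,x)-\hat{q}(t,x_{j})\|_{L^{q}}<\varepsilon$ is an integral over $s$ at a \emph{fixed} second argument, and nothing lets that argument ride along with $s$.

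To see that this is a genuine obstruction, take $X={\mathbb C}$, let $\xi(t):=|(t \bmod 2)-1|$ be the continuous triangle wave, let $\phi(v):=\max(0,1-|v|)$, and put $q(t,x):=\phi\bigl((x-\xi(t))(1+t)\bigr)$. Then $q$ is jointly continuous, $\|\hat{q}(t,x)\|_{L^{q}([0,1])}^{q}\leq 4/(1+t)$ uniformly in $x$ (the integrand is nonzero only where $|x-\xi(t+s)|<1/(1+t)$, a set of $s$ of measure at most $4/(1+t)$ since $\xi$ has slope $\pm 1$), so $\hat{q}\in C_{0}([0,\infty)\times X:L^{q}([0,1]:X))$; yet for the periodic, compact-range choice $x(t)=y(t)=\xi(t)$ (with $z=0$, $g=0$, $L_{g}\equiv 1$) one has $q(t,x(t))\equiv\phi(0)=1$, so the quantity in \eqref{pol} is identically $1$ and your claimed bound by a multiple of $N\varepsilon^{q}$ fails. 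Modulating by the slowly oscillating factor $\sin(\log(1+t))$, which is not asymptotically Stepanov almost periodic, even defeats the conclusion of the proposition itself for $f=q$. The paper is no better here --- its proof declares this step to ``follow immediately'' from $\hat{q}\in C_{0}$ and the relative compactness of $K'$ --- so you have in fact isolated a real gap in the argument as written; closing it requires a hypothesis giving pointwise rather than $L^{q}$-averaged control in the second variable, e.g. $q\in C_{0}([0,\infty)\times X:X)$, or equicontinuity of the family $\{q(t,\cdot):t\geq 0\}$ on $\overline{K'}$.
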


\begin{proof}
By Theorem \ref{vcb}, we have that the function $t\mapsto g(t, y(t)),$ $t\geq 0$ is Stepanov $q$-almost periodic. Since
\begin{align*}
f(t, x(t))=\bigl[g(t, x(t))-g(t, y(t))\bigr]+g(t, y(t))+q(t, x(t)),\quad t\geq 0,
\end{align*}
it suffices to show that
\begin{align}\label{por}
\lim_{t\rightarrow +\infty}\Biggl( \int^{t+1}_{t}\bigl\| g(s, x(s))-g(s, y(s))\bigr\|^{q} \, ds\Biggr)^{1/q}=0
\end{align}
and
\begin{align}\label{pol}
\lim_{t\rightarrow +\infty}\Biggl( \int^{t+1}_{t}\bigl\| q(s, x(s))\bigr\|^{q} \, ds\Biggr)^{1/q}=0.
\end{align}
To see that \eqref{por} holds, we can argue as in the proof of estimate \cite[(2.12)]{comp-adv}. More precisely, by \eqref{vbnmp-frim} and the H\"older inequality, we have that
\begin{align*}
\Biggl( \int^{t+1}_{t}\bigl\| g(s, x(s))&-g(s, y(s))\bigr\|^{q} \, ds\Biggr)^{1/q}
\\ & \leq \Biggl( \int^{t+1}_{t}L_{g}(s)^{q}\bigl\| x(s)-y(s)\bigr\|^{q} \, ds\Biggr)^{1/q}
\\ & \leq \Biggl( \int^{t+1}_{t}L_{g}(s)^{r}\, ds\Biggr)^{1/r} \Biggl( \int^{t+1}_{t}\bigl\| x(s)-y(s)\bigr\|^{p} \, ds\Biggr)^{1/p}
\\ & =\Biggl( \int^{t+1}_{t}L_{g}(s)^{r}\, ds\Biggr)^{1/r} \Biggl( \int^{t+1}_{t}\bigl\| z(s)\bigr\|^{p} \, ds\Biggr)^{1/p},\quad t\geq 0.
\end{align*}
Hence, \eqref{por} holds on account of $S^{r}$-boundedness of function $L_{g}(\cdot ) $ and inclusion $\hat{z}\in C_{0}([0,\infty) : L^{p}([0,1]:X)).$ The proof of \eqref{pol} follows immediately from the facts that $\hat{q}\in C_{0}([0,\infty) \times X : L^{q}([0,1]:X))$
and $ K' =\{x(t) : t \in I \setminus E'\}$
is relatively compact in $X.$
\end{proof}

If we accept the Lipschitz assumption \eqref{vbnmp-frim}, then the following result holds true:

\begin{prop}\label{bibl-frimaonji}
Let $I =[0,\infty).$
Suppose that the following conditions hold:
\begin{itemize}
\item[(i)] $g \in APS^{p}(I \times X : X)  $ with  $p \geq 1, $ and there exists a constant $L>0$ such that \eqref{vbnmp-frim} holds with the function $f(\cdot, \cdot)  $ replaced by the function $g(\cdot, \cdot)  $ therein.
\item[(ii)] $y \in APS^{p}(I:X),$ and there exists a set $E \subseteq I$ with $m (E)= 0$ such that
$ K =\{y(t) : t \in I \setminus E\}$
is compact in X.
\item[(iii)] $f(t,x)=g(t,x)+q(t,x)$ for all $t\geq 0$ and $x\in X,$ where $\hat{q}\in C_{0}([0,\infty) \times X : L^{p}([0,1]:X)).$
\item[(iv)] $x(t)=y(t)+z(t) $ for all $t\geq 0,$ where $\hat{z}\in C_{0}([0,\infty) : L^{p}([0,1]:X)).$
\item[(v)] There exists a set $E' \subseteq I$ with $m (E')= 0$ such that
$ K' =\{x(t) : t \in I \setminus E'\}$
is relatively compact in $ X.$
\end{itemize}
Then $f(\cdot, x(\cdot)) \in AAPS^{p}(I : X).$
\end{prop}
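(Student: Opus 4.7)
The plan is to run the proof of Proposition \ref{bibl} with the obvious adjustments forced by moving from the integral Lipschitz condition \eqref{vbnmp} to the pointwise Lipschitz condition \eqref{vbnmp-frim}. The structural observation is again the additive splitting
\begin{align*}
f(t, x(t)) = \bigl[g(t, x(t))-g(t, y(t))\bigr] + g(t, y(t)) + q(t, x(t)),\quad t\geq 0,
\end{align*}
which reduces the problem to identifying one $S^p$-almost periodic summand and two vanishing $S^p$-remainders. Since the integrability exponents of $f$, $g$, $q$, $x$, $y$, $z$ are now all equal to $p$, the composite exponent $q=pr/(p+r)$ that appeared in Proposition \ref{bibl} is replaced by $p$ throughout.

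First, invoke Theorem \ref{vcb-prim} in place of Theorem \ref{vcb}, applied to $g$ and $y$ under hypotheses (i) and (ii); this already yields $g(\cdot, y(\cdot))\in APS^{p}(I:X)$, so the middle summand belongs to the target class. It therefore suffices to verify the two ergodic estimates
\begin{align*}
\lim_{t\to +\infty}\int^{t+1}_{t}\bigl\| g(s, x(s))-g(s, y(s))\bigr\|^{p}\,ds=0
\end{align*}
and
\begin{align*}
\lim_{t\to +\infty}\int^{t+1}_{t}\bigl\| q(s, x(s))\bigr\|^{p}\,ds=0.
\end{align*}

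For the first limit, the Lipschitz hypothesis bypasses the H\"older step used in Proposition \ref{bibl}: the pointwise bound $\|g(s,x(s))-g(s,y(s))\|\leq L\|x(s)-y(s)\|=L\|z(s)\|$ reduces the claim to $\hat{z}\in C_{0}([0,\infty):L^{p}([0,1]:X))$. For the second limit, the strategy is identical to the one in Proposition \ref{bibl}: the hypothesis $\hat{q}\in C_{0}([0,\infty)\times X:L^{p}([0,1]:X))$ yields uniform decay of $\|\hat{q}(t,y)\|_{L^{p}([0,1]:X)}$ as $t\to\infty$ over $y$ ranging in compact sets, and this is combined with the relative compactness of $K'=\{x(t):t\in I\setminus E'\}$.

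The only mildly delicate step is that last one, where what is handed to us directly controls $\int_{0}^{1}\|q(t+s,y)\|^{p}\,ds$ for $y$ held fixed in $\overline{K'}$, whereas we need to control $\int_{0}^{1}\|q(t+s,x(t+s))\|^{p}\,ds$ with a moving argument. This is bridged by the standard device of covering the compact set $\overline{K'}$ by finitely many $\delta$-balls $B(y_{k},\delta)$ and using the joint continuity of $\hat{q}$ in its second variable to absorb the $\delta$-error, exactly as in the corresponding step of Proposition \ref{bibl}; since nothing in that argument is sensitive to the value of the integrability exponent, it transfers verbatim. With both ergodic estimates in hand, the decomposition above exhibits $f(\cdot, x(\cdot))$ as a sum of an $APS^{p}$ term and a term whose $\hat{\cdot}$ lies in $C_{0}([0,\infty):L^{p}([0,1]:X))$, proving $f(\cdot, x(\cdot))\in AAPS^{p}(I:X)$.
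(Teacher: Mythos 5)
Your proposal is correct and is precisely the argument the paper intends: the paper states this proposition without proof, presenting it as the evident modification of Proposition \ref{bibl}, and your substitutions (Theorem \ref{vcb-prim} in place of Theorem \ref{vcb}, the pointwise Lipschitz bound $\|g(s,x(s))-g(s,y(s))\|\leq L\|z(s)\|$ in place of the H\"older step, and the exponent $p$ in place of $q=pr/(p+r)$) are exactly the ones required. If anything, you are more explicit than the paper on the $q$-term, where the paper's own treatment in Proposition \ref{bibl} is a one-line appeal to $\hat{q}\in C_{0}([0,\infty)\times X:L^{p}([0,1]:X))$ and the relative compactness of $K'$.
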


For the sequel, we need to remind ourselves of the following result recently established in \cite{EJDE}:

\begin{lem}\label{ravi-and}
Suppose that $1\leq p <\infty,$ $1/p +1/q=1$
and $(R(t))_{t> 0}\subseteq L(X)$ is a strongly continuous operator family satisfying that $M:=\sum_{k=0}^{\infty}\|R(\cdot)\|_{L^{q}[k,k+1]}<\infty .$ If $f : {\mathbb R} \rightarrow X$ is $S^{p}$-almost periodic, then the function $F(\cdot),$ given by
\begin{align}\label{wer}
F(t):=\int^{t}_{-\infty}R(t-s)f(s)\, ds,\quad t\in {\mathbb R},
\end{align}
is well-defined and almost periodic.
\end{lem}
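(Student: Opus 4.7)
The plan is to reduce everything to a Hölder-plus-summability estimate obtained after the change of variables $u=t-s$, which rewrites the convolution as $F(t)=\int_{0}^{\infty}R(u)f(t-u)\,du$. Splitting the half-line into the unit intervals $[k,k+1]$ and applying Hölder's inequality with exponents $q$ and $p$ on each piece yields
\[
\int_{k}^{k+1}\|R(u)\|\,\|f(t-u)\|\,du \leq \|R(\cdot)\|_{L^{q}[k,k+1]} \cdot \|f(t-\cdot)\|_{L^{p}[k,k+1]}.
\]
After the substitution $v=t-u$, the second factor is just the $L^{p}$ norm of $f$ over a unit interval, so it is bounded by $\|f\|_{S^{p}}$. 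Summing over $k$ and invoking the hypothesis $M=\sum_{k=0}^{\infty}\|R(\cdot)\|_{L^{q}[k,k+1]}<\infty$ gives $\|F(t)\|\leq M\|f\|_{S^{p}}$, so $F(t)$ is well defined (the integrand is measurable thanks to the strong continuity of $R(\cdot)$).

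For continuity of $F$, I would use that the Bochner transform $\hat{f}\colon \mathbb{R}\to L^{p}([0,1]:X)$ is almost periodic, hence uniformly continuous. A direct calculation shows
\[
\|f(t+h-\cdot)-f(t-\cdot)\|_{L^{p}[k,k+1]}=\bigl\|\hat{f}(t-k-1+h)-\hat{f}(t-k-1)\bigr\|_{L^{p}([0,1]:X)},
\]
which tends to $0$ as $h\to 0$, uniformly in $t$ and $k$. Combined with the summability of $\|R(\cdot)\|_{L^{q}[k,k+1]}$ and the uniform majorant $2\|f\|_{S^{p}}$, dominated convergence on the series yields $\|F(t+h)-F(t)\|\to 0$ as $h\to 0$.

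The almost periodicity is then obtained by the same Hölder-split estimate, applied to $F(t+\tau)-F(t)=\int_{0}^{\infty}R(u)[f(t+\tau-u)-f(t-u)]\,du$. Given $\epsilon>0$, Stepanov almost periodicity of $f$ provides a relatively dense set of $\tau\in\mathbb{R}$ with $\sup_{t\in\mathbb{R}}\|\hat{f}(t+\tau)-\hat{f}(t)\|_{L^{p}([0,1]:X)}\leq\epsilon$. For any such $\tau$, each factor $\|f(t+\tau-\cdot)-f(t-\cdot)\|_{L^{p}[k,k+1]}$ is, after the same substitution as above, an $L^{p}$-norm of a translation difference over a unit interval, hence bounded by $\epsilon$. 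Summing over $k$ gives $\|F(t+\tau)-F(t)\|\leq M\epsilon$ for all $t\in\mathbb{R}$, so $\tau$ is an $M\epsilon$-period for $F$; since $M$ is fixed, this exhibits $F$ as Bohr almost periodic.

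The only delicate point I foresee is the continuity step: because $R(\cdot)$ is merely strongly continuous (not norm continuous), one cannot move the limit in $h$ past the integral directly, and the argument must instead rely on the $L^{p}$-continuity of $t\mapsto\hat{f}(t)$ together with the summability of the $\|R(\cdot)\|_{L^{q}[k,k+1]}$. All of the well-definedness, continuity and almost-periodicity estimates ultimately reduce to exactly the same Hölder-plus-summability template, which is the structural reason the lemma works uniformly for every $p\in[1,\infty)$.
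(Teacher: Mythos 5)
Your argument is correct and complete: the H\"older-plus-summability estimate over the unit intervals $[k,k+1]$, the identification of each translated $L^{p}$-norm with a value of the Bochner transform $\hat{f}$, and the transfer of $\epsilon$-periods from $\hat{f}$ to $F$ (with constant $M$) together give well-definedness, continuity and Bohr almost periodicity. The paper itself offers no proof of this lemma (it is quoted from the author's reference \cite{EJDE}), but your argument is the standard one and essentially the intended proof.
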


\begin{rem}\label{conclusion}
Suppose that $t\mapsto \|R(t)\|,$ $t\in (0,1]$ is an element of the space $L^{q}[0,1].$
Then
the inequality $\sum_{k=0}^{\infty}\|R(\cdot)\|_{L^{q}[k,k+1]}<\infty $ holds provided that\\ $(R(t))_{t>0}$ is exponentially decaying at infinity or that there exists a finite number $\zeta <0$ such that $\|R(t)\| =O(t^{\zeta}),$ $t\rightarrow +\infty$ and
\begin{itemize}
\item[(i)] $p=1$ and $\zeta <-1,$ or
\item[(ii)]  $p>1$ and $\zeta <(1/p)-1.$
\end{itemize}
\end{rem}

We need to prove the following extension of \cite[Lemma 4.1]{brazil}, as well.

\begin{lem}\label{andrade}
Suppose that $(R(t))_{t>0}\subseteq L(X)$ is strongly
continuous and
$\|R(t) \| =O( e^{-\omega t}t^{\beta -1}),$ $t> 0$ for some numbers $\omega>0$ and $ \beta> 0.$ Let $f\in AAPS^{q}([0,\infty) : X)$ with some $q\in [1,\infty),$ let $1/q+1/q'=1,$ and let the following hold:
\begin{align}\label{pianino}
q'(\beta -1)>-1,\mbox{ provided }q>1\mbox{ and }\beta=1,\mbox{ provided }q=1.
\end{align}
Define 
\begin{align*}
H(t):=\int^{t}_{0}R(t-s)f(s)\, ds,\quad t\geq 0.
\end{align*}
Then $H\in AAP([0,\infty) : X).$
\end{lem}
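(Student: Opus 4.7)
The plan is to decompose $f$ via Lemma \ref{tricky} and treat the two pieces separately, reducing to the infinite-past convolution handled by Lemma \ref{ravi-and}. By Lemma \ref{tricky}, write $f = g + \phi$ on $[0,\infty)$, where $g : \mathbb{R} \to X$ is $S^{q}$-almost periodic and $\hat{\phi} \in C_{0}([0,\infty) : L^{q}([0,1] : X))$; in particular $\phi$ is $S^{q}$-bounded. Correspondingly, split
\begin{align*}
H(t) = \int_{0}^{t} R(t-s) g(s) \, ds + \int_{0}^{t} R(t-s) \phi(s) \, ds =: H_{1}(t) + H_{2}(t),
\end{align*}
and aim to show $H_{1} \in AAP([0,\infty) : X)$ and $H_{2} \in C_{0}([0,\infty) : X)$.

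The first preparatory observation, used throughout, is that the hypotheses $\|R(t)\| = O(e^{-\omega t} t^{\beta-1})$ and \eqref{pianino} together give $M := \sum_{k=0}^{\infty} \|R(\cdot)\|_{L^{q'}[k,k+1]} < \infty$: the condition \eqref{pianino} ensures $\|R\|_{L^{q'}[0,1]} < \infty$ (this is where $q'(\beta-1) > -1$ for $q>1$, respectively $\beta = 1$ for $q=1$, matters), while exponential decay makes the tail geometric. This places us in the setting of Lemma \ref{ravi-and} and Remark \ref{conclusion}.

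For $H_{1}$, I would split
\begin{align*}
H_{1}(t) = \int_{-\infty}^{t} R(t-s) g(s) \, ds - \int_{-\infty}^{0} R(t-s) g(s) \, ds, \quad t \geq 0.
\end{align*}
The first term is almost periodic on $\mathbb{R}$ by Lemma \ref{ravi-and}. For the second, the substitution $u = t - s$ yields $\int_{t}^{\infty} R(u) g(t-u) \, du$, and by breaking the integration into unit intervals and applying H\"older with exponents $(q',q)$ one obtains
\begin{align*}
\left\| \int_{-\infty}^{0} R(t-s) g(s) \, ds \right\| \leq \|g\|_{S^{q}} \sum_{k \geq \lfloor t \rfloor} \|R(\cdot)\|_{L^{q'}[k,k+1]},
\end{align*}
which tends to $0$ as $t \to \infty$ by convergence of the series. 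Hence $H_{1}$ is the sum of an almost periodic function and a $C_{0}$-function, so $H_{1} \in AAP([0,\infty) : X)$.

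For $H_{2}$, change variables to $u = t - s$ and split the integral at $t/2$. On $[0, t/2]$ one has $t - u \geq t/2$, and the same unit-interval-and-H\"older argument gives a bound
\begin{align*}
\int_{0}^{t/2} \|R(u)\| \, \|\phi(t-u)\| \, du \leq M \cdot \sup_{s \geq t/2 - 1} \|\phi\|_{L^{q}[s,s+1]},
\end{align*}
which tends to $0$ because $\hat{\phi} \in C_{0}$. On $[t/2, t]$ one has $u \geq t/2$, so the corresponding bound is
\begin{align*}
\int_{t/2}^{t} \|R(u)\| \, \|\phi(t-u)\| \, du \leq \|\phi\|_{S^{q}} \sum_{k \geq \lfloor t/2 \rfloor} \|R(\cdot)\|_{L^{q'}[k,k+1]},
\end{align*}
which vanishes as $t \to \infty$ since it is the tail of a convergent series. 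Thus $H_{2} \in C_{0}([0,\infty) : X)$, and continuity of $H$ follows from the integrability of the singularity of $\|R(\cdot)\|$ near $0$ and strong continuity of $R$ on $(0,\infty)$.

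The main obstacle is the careful bookkeeping in the $\phi$-piece: the dyadic split at $t/2$ is essential since $\phi$ is only $S^{q}$-bounded, not in $L^{q}([0,\infty):X)$, and one must exploit the exponential decay of $R$ on the ``large $u$'' half and the asymptotic vanishing of $\hat{\phi}$ on the ``small $u$'' half. The endpoint condition \eqref{pianino} is precisely what is needed to make the local $L^{q'}$-norm of $R$ near $0$ finite, and is used in every H\"older estimate above.
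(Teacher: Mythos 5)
Your proposal is correct and follows essentially the same route as the paper: decompose $f$ via Lemma \ref{tricky}, extend the almost periodic part to the infinite-past convolution handled by Lemma \ref{ravi-and}, and show that the three remaining terms (the tail $\int_{-\infty}^{0}$ of the $g$-piece and the two halves of the $\phi$-piece split at $t/2$) vanish at infinity by H\"older estimates over unit intervals. Your split of the $\phi$-integral at $u=t/2$ is the paper's split at $s=t/2$ after the substitution $u=t-s$, and your use of \eqref{pianino} to control $\|R\|_{L^{q'}[0,1]}$ matches the paper's handling of the singularity at zero.
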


\begin{proof}
Suppose that 
the locally $p$-integrable functions $g: {\mathbb R} \rightarrow X,$ 
$q: [0,\infty)\rightarrow X$ satisfy the conditions from Lemma \ref{tricky}. Let the function $G(\cdot)$ be given by \eqref{wer}, with  $R(\cdot)$
replaced therein by $T(\cdot);$ then we know from Lemma \ref{ravi-and} that $G(\cdot)$ is almost periodic.
Set
\begin{align*}
F(t):=\int^{t}_{0}T(t-s)q(s)\, ds-\int^{\infty}_{t}T(s)g(t-s)\, ds,\quad t\geq 0.
\end{align*}
Using H\"older inequality, we can simply prove that $H(\cdot)$ is well-defined. Since $H(t)=G(t)+F(t)$ for all $ t\geq 0,$ it suffices to show that $F\in C_{0}([0, \infty) : X).$ It is clear that
\begin{align*}
\Biggl\|\int^{\infty}_{t}T(s)& g(t-s)\, ds\Biggr\|\leq \sum_{k=0}^{\infty}\|R(\cdot)\|_{L^{q'}[t+k,t+k+1]}  \|g\|_{S^{q}}
\\ & \leq \sum_{k=0}^{\infty}\|R(\cdot)\|_{L^{\infty}[t+k,t+k+1]}  \|g\|_{S^{q}}
\leq \sum_{k=0}^{\infty}\|R(\cdot)\|_{L^{\infty}[t+k,t+k+1]}  \|g\|_{S^{q}}
\\ & \leq \mbox{Const. }\|g\|_{S^{q}}e^{-ct},\quad t> 1,
\end{align*}
so that $\lim_{t\rightarrow \infty}\int^{\infty}_{t}T(s) g(t-s)\, ds=0.$ 
Arguing as above, we get that
\begin{align*}
\Biggl\| \int^{t/2}_{0}T(t-s)& q(s)\, ds\Biggr\| \leq \|g\|_{S^{q}} \sum_{k=0}^{\lceil t/2\rceil}\|R(t-\cdot)\|_{L^{q'}[k,k+1]}
\\ & \leq M(1+\lceil t/2\rceil)e^{-c (t-\lceil t/2\rceil -1)}\|g\|_{S^{q}},\quad t\geq 2,
\end{align*}
so that $\lim_{t\rightarrow \infty}\int^{t/2}_{0}T(t-s)q(s)\, ds=0.$
Therefore, it remains to be proved that $\lim_{t\rightarrow \infty}\int^{t}_{t/2}T(t-s)q(s)\, ds=0$ (observe that the integral in this limit expression converges by \eqref{pianino} and the $S^{q}$-boundedness of function $q(\cdot)$). For that, fix a number
$\epsilon>0.$ Then there exists $t_{0}>0$ such that $\int^{t+1}_{t}\|q(s)\|^{q}\, ds<\epsilon^{q},$ $t\geq t_{0}.$
Let $t>2t_{0}+6.$ Then the H\"older inequality implies the existence of a finite constant $c>0$ such that:
\begin{align*}
\Biggl\| \int^{t}_{t/2}T(t-s)& q(s)\, ds\Biggr\| 
\\ &  \leq c\sum_{k=0}^{\lfloor t/2\rfloor-2}\|R(t-\cdot)\|_{L^{q'}[t/2+k,t/2+k+1]}\epsilon +\epsilon \bigl \|\cdot^{\beta-1}\bigr\|_{L^{q'}[0,2]}
\\ &  \leq c\sum_{k=0}^{\lfloor t/2\rfloor-2}\|R(t-\cdot)\|_{L^{\infty}[t/2+k,t/2+k+1]}\epsilon +\epsilon \bigl \|\cdot^{\beta-1}\bigr\|_{L^{q'}[0,2]}
\\ &  \leq c\epsilon M \sum_{k=0}^{\lfloor t/2\rfloor-2}e^{-c(t/2+k)}+\epsilon \bigl \|\cdot^{\beta-1}\bigr\|_{L^{q'}[0,2]}
\\ &  \leq c\epsilon M e^{-ct/2   }\sum_{k=0}^{\infty}e^{-ck}
+\epsilon \bigl \|\cdot^{\beta-1}\bigr\|_{L^{q'}[0,2]}.
\end{align*}
This yields the final conclusion.
\end{proof}

Suppose now that the condition (P) holds.
Then there exists a degenerate strongly continuous semigroup $(T(t))_{t> 0}\subseteq L(X)$ generated by ${\mathcal A}$ and
$\|T(t) \| =O( e^{-ct}t^{\beta -1}),$ $t> 0$ (\cite{EJDE}).
By a mild solution of \eqref{favini}, we mean any continuous function $u(\cdot)$ such that $u(t)= (\Lambda u)(t),$ $t\in {\mathbb R},$ where
$$
t\mapsto (\Lambda u)(t):=\int_{-\infty}^{t}T(t-s)f(s,u(s))\, ds,\ t\in {\mathbb R}.
$$

\begin{thm}\label{zeljezo}
Suppose that $f \in APS^{p}({\mathbb R} \times X : X)  $ with  $p > 1, $ and there exist a number  $ r\geq \max (p, p/p -1)$ and a function $ L_{f}\in L_{S}^{r}({\mathbb R}) $ such that \emph{\eqref{vbnmp}} holds with $I={\mathbb R}.$
Let the following condition hold:
\begin{align}\label{bez-kuce}
\beta=1,\mbox{ provided }r=p/p-1 \mbox{ and }\frac{pr}{pr-p-r}<\frac{1}{1-\beta},\mbox{ provided }r>p/p-1.
\end{align}
Set 
\begin{align*}
q':=\infty,\mbox{ provided }r=p/p-1 \mbox{ and }q':=\frac{pr}{pr-p-r},\mbox{ provided }r>p/p-1.
\end{align*}
Assume that $M:=\sum_{k=0}^{\infty}\|T(\cdot)\|_{L^{q'}[k,k+1]}<\infty $ and $M\|L_{f}\|_{S^{r}}<1.$ Then there exists a unique almost periodic mild solution of \emph{\eqref{favini}}.
\end{thm}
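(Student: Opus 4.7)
The plan is to apply the Banach contraction principle to the map $\Lambda$ on the complete metric space $\bigl(AP({\mathbb R}:X), \|\cdot\|_{\infty}\bigr)$. Set $q_{0}:=pr/(p+r)$; then, thanks to the identity $1/p+1/r+1/q'=1$, we have that $q'$ (as defined in the statement) is precisely the H\"older conjugate exponent of $q_{0}$, i.e., $1/q_{0}+1/q'=1$. This pairing is what will allow us to apply Lemma \ref{ravi-and} to the convolution that defines $\Lambda$.

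First I would verify that $\Lambda$ maps $AP({\mathbb R}:X)$ into itself. Fix $u\in AP({\mathbb R}:X).$ The range of $u(\cdot)$ is relatively compact in $X$ by Bochner's classical criterion, and $u(\cdot)$ is $S^{p}$-almost periodic since almost periodicity implies Stepanov almost periodicity. Applying Theorem \ref{vcb} with the hypothesis \eqref{vbnmp} yields $f(\cdot,u(\cdot))\in APS^{q_{0}}({\mathbb R}:X)$. Since by assumption $M=\sum_{k=0}^{\infty}\|T(\cdot)\|_{L^{q'}[k,k+1]}<\infty,$ condition \eqref{bez-kuce} merely records that, together with the exponential decay and removable singularity estimate $\|T(t)\|=O(e^{-ct}t^{\beta-1})$, this summability is consistent (cf.\ Remark \ref{conclusion}). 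Lemma \ref{ravi-and}, applied with $p=q_{0}$ and $q=q'$, then gives that $(\Lambda u)(t)=\int_{-\infty}^{t}T(t-s)f(s,u(s))\,ds$ is well-defined and almost periodic. Hence $\Lambda u \in AP({\mathbb R}:X).$

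Next I would establish the contraction estimate. For $u,v\in AP({\mathbb R}:X)$ and $t\in {\mathbb R},$ splitting the integral over the intervals $[t-k-1,t-k]$ and applying the generalized H\"older inequality with the three conjugate exponents $q',\, r,\, p$ gives
\begin{align*}
\bigl\|(\Lambda u)(t)-(\Lambda v)(t)\bigr\|
&\leq \sum_{k=0}^{\infty}\int_{t-k-1}^{t-k}\|T(t-s)\|L_{f}(s)\|u(s)-v(s)\|\,ds \\
&\leq \sum_{k=0}^{\infty}\|T(\cdot)\|_{L^{q'}[k,k+1]}\|L_{f}\|_{L^{r}[t-k-1,t-k]}\|u-v\|_{L^{p}[t-k-1,t-k]} \\
&\leq M\,\|L_{f}\|_{S^{r}}\,\|u-v\|_{\infty},
\end{align*}
where in the last step I used $\|u-v\|_{L^{p}[t-k-1,t-k]}\leq \|u-v\|_{\infty}$. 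Taking the supremum over $t$ yields $\|\Lambda u-\Lambda v\|_{\infty}\leq M\|L_{f}\|_{S^{r}}\,\|u-v\|_{\infty},$ and by the standing hypothesis $M\|L_{f}\|_{S^{r}}<1$ the map $\Lambda$ is a strict contraction on $AP({\mathbb R}:X).$

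Finally, the Banach fixed point theorem produces a unique fixed point of $\Lambda$ in $AP({\mathbb R}:X)$, which is by definition the unique almost periodic mild solution of \eqref{favini}. The main obstacle is the exponent bookkeeping in step one: one must check that the order of Stepanov almost periodicity produced by Theorem \ref{vcb} (namely $q_{0}$) has precisely the H\"older conjugate $q'$ that appears in the summability hypothesis $M<\infty$, and that the triple H\"older in the contraction step uses the same $q'$; the special case $r=p/(p-1)$ (where $q_{0}=1,\ q'=\infty$) corresponds to $\beta=1$ and must be handled separately, but this is exactly what condition \eqref{bez-kuce} prescribes.
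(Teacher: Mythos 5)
Your proposal is correct and follows essentially the same route as the paper's proof: Theorem \ref{vcb} (via relative compactness of the range of an almost periodic function) to get $f(\cdot,u(\cdot))\in APS^{q}({\mathbb R}:X)$ with $q=pr/(p+r)$, Lemma \ref{ravi-and} to see that $\Lambda$ maps $AP({\mathbb R}:X)$ into itself, and the H\"older/contraction estimate $\|\Lambda u-\Lambda v\|_{\infty}\leq M\|L_{f}\|_{S^{r}}\|u-v\|_{\infty}$ followed by the Banach fixed point theorem. The only cosmetic difference is that you apply the three-exponent H\"older inequality in one step where the paper applies the two-exponent version twice.
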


\begin{proof}
Since the range of any function $u \in AP ({\mathbb R} : X)$  is relatively compact in $X,$
Theorem \ref{vcb} yields that $f(\cdot, u(\cdot)) \in APS^{q}({\mathbb R} : X),$ where $q=pr/p+r.$ Since $(T(t))_{t>0}$ is exponentially decaying at infinity and $1/q' +1/q=1,$ 
the condition \eqref{bez-kuce} yields that $M<\infty .$ 
Therefore,
we can apply Lemma \ref{ravi-and} (see also Remark \ref{conclusion}) in order to see that the mapping $\Lambda : AP ({\mathbb R} : X) \rightarrow AP ({\mathbb R} : X)$ is well-defined. Furthermore, for every $t\in {\mathbb R},$ we have by H\"older inequality:
\begin{align*}
\Bigl\| & (\Lambda u)(t) -(\Lambda v)(t)   \Bigr \|=\Biggl\| \int^{\infty}_{0}T(s) \bigl[f(t-s,u(t-s))-f(t-s,v(t-s))\bigr] \, ds \Biggr \|
\\ & \leq \sum_{k=0}^{\infty} \int^{k+1}_{k}\bigl\|T(s)\bigr\| \bigl \| f(t-s,u(t-s))-f(t-s,v(t-s)) \bigr\| \, ds 
\\ &   \leq \sum_{k=0}^{\infty} \bigl\|T(\cdot)\bigr\|_{L^{q'}[k,k+1]} \bigl \|f(t-\cdot,u(t-\cdot))-f(t-\cdot,v(t-\cdot)) \bigr\|_{L^{q}[k,k+1]}
\\ & \leq \sum_{k=0}^{\infty} \bigl\|T(\cdot)\bigr\|_{L^{q'}[k,k+1]} \bigl \|L_{f}(t-\cdot) \bigl[u(t-\cdot)- v(t-\cdot)\bigr] \bigr\|_{L^{q}[k,k+1]}
\\ & \leq \sum_{k=0}^{\infty} \bigl\|T(\cdot)\bigr\|_{L^{q'}[k,k+1]}\|L_{f}\|_{S^{r}} \bigl \|u(t-\cdot)- v(t-\cdot)\bigr\|_{L^{p}[k,k+1]}
\\ & \leq \sum_{k=0}^{\infty} \bigl\|T(\cdot)\bigr\|_{L^{q'}[k,k+1]}\|L_{f}\|_{S^{r}} \bigl \| u(\cdot)- v(\cdot)\bigr\|_{L^{\infty}({\mathbb R})}.
\end{align*}
Since $M\|L_{f}\|_{S^{r}}<1,$ we can apply the Banach contraction principle to complete the proof of theorem.
\end{proof}

We can similarly prove the following result provided that the Lipschitz type condition \eqref{vbnmp-frim} holds:

\begin{thm}\label{zeljeznica}
Suppose that $f \in APS^{p}({\mathbb R} \times X : X)  $ with  $p \geq 1, $ $L>0$ and \eqref{vbnmp-frim} holds
with $I={\mathbb R}.$
Let the following condition hold:
\begin{align*}
\beta=1,\mbox{ provided }p=1 \mbox{ and }\frac{p}{p-1}<\frac{1}{1-\beta},\mbox{ provided }p>1.
\end{align*}
Set 
\begin{align*}
q':=\infty,\mbox{ provided }p=1 \mbox{ and }q':=\frac{p}{p-1},\mbox{ provided }p>1.
\end{align*}
Assume that $M:=\sum_{k=0}^{\infty}\|T(\cdot)\|_{L^{q'}[k,k+1]}<\infty $ and $ML<1.$ Then there exists a unique almost periodic mild solution of \emph{\eqref{favini}}.
\end{thm}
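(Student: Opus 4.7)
The plan is to mimic the proof of Theorem \ref{zeljezo} step by step, substituting the global Lipschitz constant $L$ for the Stepanov-integrable Lipschitz coefficient $L_f$, and invoking Theorem \ref{vcb-prim} in place of Theorem \ref{vcb} so as to include the case $p = 1$.

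First, I would verify that the fixed point operator $\Lambda$ maps $AP({\mathbb R} : X)$ into itself. Given $u \in AP({\mathbb R} : X)$, the range $\{u(t) : t \in {\mathbb R}\}$ is relatively compact in $X$, so Theorem \ref{vcb-prim} (whose hypotheses are exactly \eqref{vbnmp-frim} and membership of $u$ in $APS^p({\mathbb R} : X)$, the latter being automatic from $u \in AP({\mathbb R} : X)$) yields $f(\cdot, u(\cdot)) \in APS^p({\mathbb R} : X)$. The choice of $q'$ together with the dichotomy on $\beta$ and $p$ is precisely what is needed so that $1/q' + 1/p = 1$ and the exponentially decaying bound $\|T(t)\| = O(e^{-ct} t^{\beta-1})$ is square-integrable in the appropriate Orlicz-type sense; Remark \ref{conclusion} then ensures $M < \infty$. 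Consequently Lemma \ref{ravi-and} applied with the pair $(p, q')$ gives that $\Lambda u \in AP({\mathbb R} : X)$, so $\Lambda : AP({\mathbb R} : X) \rightarrow AP({\mathbb R} : X)$ is well-defined.

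Next, I would estimate $\|(\Lambda u)(t) - (\Lambda v)(t)\|$ for $u,v \in AP({\mathbb R} : X)$. Writing the difference as $\int_0^\infty T(s)[f(t-s,u(t-s)) - f(t-s,v(t-s))]\,ds$, splitting the domain into unit intervals $[k,k+1]$, applying H\"older's inequality with exponents $q'$ and $p$ on each piece, and using \eqref{vbnmp-frim}, I obtain
\begin{align*}
\|(\Lambda u)(t) - (\Lambda v)(t)\|
&\leq \sum_{k=0}^{\infty} \|T(\cdot)\|_{L^{q'}[k,k+1]} \, L \, \|u(t-\cdot) - v(t-\cdot)\|_{L^p[k,k+1]} \\
&\leq ML \, \|u - v\|_{L^\infty({\mathbb R} : X)}.
\end{align*}
Taking the supremum over $t \in {\mathbb R}$ gives $\|\Lambda u - \Lambda v\|_\infty \leq ML\|u-v\|_\infty$. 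Since $ML < 1$, the Banach contraction principle supplies a unique fixed point of $\Lambda$ in $AP({\mathbb R} : X)$, which is the desired unique almost periodic mild solution of \eqref{favini}.

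There is no real obstacle here; the only substantive point is that the Lipschitz condition \eqref{vbnmp-frim} permits the borderline case $p = 1$ (which forces $q' = \infty$ and hence $\beta = 1$ via the stated condition), where the H\"older step degenerates into a supremum–$L^1$ pairing. Everything else is a transparent adaptation of the proof of Theorem \ref{zeljezo}, with $\|L_f\|_{S^r}$ replaced by $L$ and Theorem \ref{vcb} replaced by Theorem \ref{vcb-prim}.
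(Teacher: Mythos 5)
Your proposal is correct and is precisely the adaptation the paper intends: the paper gives no separate proof of Theorem \ref{zeljeznica}, stating only that it is proved ``similarly'' to Theorem \ref{zeljezo}, and your argument carries out exactly that adaptation — Theorem \ref{vcb-prim} in place of Theorem \ref{vcb}, the constant $L$ in place of $\|L_{f}\|_{S^{r}}$, and the same H\"older/contraction estimate with the conjugate pair $(p,q')$. No gaps.
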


Let the initial value $u_{0}$ be a point of the continuity of semigroup $(T(t))_{t> 0};$ see e.g. \cite[Theorem 3.3, Theorem 3.5]{faviniyagi}. Let $ \|T(t) \| \leq M e^{-c t}t^{\beta -1},$ $t> 0$ for some constant $M> 0.$

By a mild solution $
u(\cdot)=u(\cdot;u_{0})$ of problem (DFP)$_{f,s}$ we mean any function $u\in C([0,\infty) : X)$ such that
$$
u(t)=(\Upsilon u)(t):=T(t)u_{0}+\int^{t}_{0}T(t-s)f(s,u(s))\, ds,\quad t\geq 0.
$$
Suppose that \eqref{vbnmp} holds for a.e. $t>0$ ($I=[0,\infty)$), with locally integrable positive function $L_{f}(\cdot).$ Set, for every $n\in {\mathbb N},$
\begin{align*}
M_{n}:=M^{n}&\sup_{t\geq 0}e^{-ct}\int^{t}_{0}\int^{x_{n}}_{0}\cdot \cdot \cdot \int^{x_{2}}_{0}e^{cx_{1}}
\bigl(t-x_{n}\bigr)^{\beta -1}
\\& \times \prod^{n}_{i=2}\bigl(x_{i}-x_{i-1}\bigr)^{\beta -1} \prod^{n}_{i=1}L_{f}(x_{i})\, dx_{1}\, dx_{2}\cdot \cdot \cdot \, dx_{n}.
\end{align*}
Then a simple calculation shows that
\begin{align}\label{storn}
\Bigl \| \bigl(\Upsilon^{n} u\bigr)-\bigl(\Upsilon^{n} v\bigr)\Bigr\|_{\infty}\leq M_{n}\bigl\| u- v\bigr\|_{\infty},\quad u,\ v\in BUC([0,\infty) : X),\ n\in {\mathbb N}.
\end{align}

Now we are able to state the main result of this paper:

\begin{thm}\label{stepa}
Suppose that $I=[0,\infty)$ and the following conditions hold:
\begin{itemize}
\item[(i)] $g \in APS^{p}(I \times X : X)  $ with  $p > 1, $ and there exist a number $ r\geq \max (p, p/p -1)$ and a function $ L_{g}\in L_{S}^{r}(I:X) $ such that \eqref{vbnmp} holds with the function $f(\cdot, \cdot)  $ replaced by the function $g(\cdot, \cdot)  $ therein.
\item[(ii)] $f(t,x)=g(t,x)+q(t,x)$ for all $t\geq 0$ and $x\in X,$ where $\hat{q}\in C_{0}(I \times X : L^{q}([0,1]:X))$
and $q=pr/p+r.$
\item[(iii)] 
$\beta=1,\mbox{ provided }r=p/p-1 \mbox{ and }\frac{pr}{pr-p-r}<\frac{1}{1-\beta},\mbox{ provided }r>p/p-1.$
\item[(iv)] 
\emph{\eqref{vbnmp}} holds for a.e. $t>0$, with  locally bounded positive function $L_{f}(\cdot)$ satisfying
$M_{n}<1$ for some $n\in {\mathbb N}.$
\end{itemize}
Then there exists a unique asymptotically almost periodic solution of inclusion $\emph{(DFP)}_{f,s}.$
\end{thm}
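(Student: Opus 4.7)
The plan is to realize the asymptotically almost periodic mild solution as the unique fixed point of the integral operator $\Upsilon$ acting on $AAP([0,\infty):X)$, a closed subspace of the Banach space $BUC([0,\infty):X)$. I shall invoke the classical generalization of the Banach contraction principle: if $\Upsilon$ preserves $AAP([0,\infty):X)$ and some iterate $\Upsilon^{n}$ is a strict contraction, then $\Upsilon$ admits a unique fixed point (with both existence and uniqueness following cleanly from the fact that the unique fixed point of $\Upsilon^{n}$ is automatically a fixed point of $\Upsilon$). The contraction half comes for free from the pointwise estimate \eqref{storn} together with hypothesis (iv), which furnishes $n\in \mathbb{N}$ with $M_{n}<1$; the substance of the argument lies in the invariance $\Upsilon(AAP([0,\infty):X))\subseteq AAP([0,\infty):X)$.

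For invariance, first note that the forcing term $t\mapsto T(t)u_{0}$ is continuous on $[0,\infty)$ (by the choice of $u_{0}$ as a point of continuity of the semigroup) and satisfies $\|T(t)u_{0}\|\leq Me^{-ct}t^{\beta-1}\|u_{0}\|$, so it belongs to $C_{0}([0,\infty):X)\subseteq AAP([0,\infty):X)$. For the convolution term, fix $u\in AAP([0,\infty):X)$ and split $u=y+z$ with $y\in AP([0,\infty):X)$ and $z\in C_{0}([0,\infty):X)$. Then $y$ has relatively compact range in $X$ (a standard fact for almost periodic functions, accessible via the extension result underlying Proposition \ref{sew}), whence so does $u$; moreover $\hat{z}\in C_{0}([0,\infty):L^{p}([0,1]:X))$ because $z(t+\cdot)$ tends to zero uniformly on $[0,1]$ as $t\rightarrow\infty$. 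Together with the decomposition $f=g+q$ supplied by hypothesis (ii), all assumptions (i)--(v) of Proposition \ref{bibl} are in force, yielding $f(\cdot,u(\cdot))\in AAPS^{q}([0,\infty):X)$ for $q=pr/(p+r)$.

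Next I would verify that condition (iii) of the present theorem is exactly the hypothesis \eqref{pianino} of Lemma \ref{andrade} under the identification $1/q+1/q'=1$: when $r=p/(p-1)$ one has $q=1$ and condition (iii) forces $\beta=1$, while for $r>p/(p-1)$ the H\"older conjugate is $q'=pr/(pr-p-r)$ and the inequality $q'<1/(1-\beta)$ is precisely $q'(\beta-1)>-1$. Hence Lemma \ref{andrade} applies, showing $t\mapsto\int_{0}^{t}T(t-s)f(s,u(s))\,ds\in AAP([0,\infty):X)$, and the invariance claim follows by addition. The main obstacle, I expect, is precisely this bookkeeping: correctly pairing the splittings of $f$ and $u$ when invoking Proposition \ref{bibl}, and calibrating the Stepanov index $q$ against the kernel exponent $\beta$ so that Lemma \ref{andrade} is applicable. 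Once invariance is secured, the contraction step via \eqref{storn} and $M_{n}<1$ delivers the unique asymptotically almost periodic mild solution.
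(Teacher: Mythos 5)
Your proposal is correct and follows essentially the same route as the paper: invariance of $AAP([0,\infty):X)$ under $\Upsilon$ via Proposition \ref{bibl} (using the relative compactness of ranges of a.p.\ and a.a.p.\ functions) and Lemma \ref{andrade} (after checking that condition (iii) is exactly \eqref{pianino} for $q=pr/(p+r)$), followed by the iterated-contraction extension of the Banach principle using \eqref{storn} and $M_{n}<1$. The only divergence is the uniqueness step: you deduce uniqueness within $AAP([0,\infty):X)$ from the fixed-point theorem itself, whereas the paper additionally runs a Gronwall-type argument (via \cite[Lemma 6.19, p. 111]{Diet}) to obtain uniqueness among all continuous mild solutions; for the statement as written your version suffices.
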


\begin{proof}
Define the number $q'$ as in the formulation of Theorem \ref{zeljezo}.
By (i)-(ii) and Proposition \ref{bibl}, we have that $f(\cdot, x(\cdot)) \in AAPS^{q}(I : X)$ for any $x\in AAP(I:X),$ where $q=pr/p+r;$ here, it is only worth observing that the range of an $X$-valued asymptotically almost periodic function is relatively compact in $X$ by \cite[Theorem 2.4]{zhang-c-prim}.
Due to (iii), the condition \eqref{pianino} holds.
Using Lemma \ref{andrade} and the obvious equality $\lim_{t\rightarrow +\infty}T(t)u_{0}=0$, we get that the mapping $
\Upsilon : AAP(X) \rightarrow AAP(X)$ is well-defined.
Making use of \eqref{storn}, (iv) and a well-known extension of the Banach contraction principle, we obtain the existence of an asymptotically almost periodic solution of inclusion $(DFP)_{f,s}.$ The uniqueness of solutions can be proved as follows: let
$u(\cdot)
$ and $
v(\cdot)
$ be two mild solutions of  inclusion $(DFP)_{f,s}.$ Then we have
\begin{align*}
\| u(t)-v(t) \| & \leq M\int^{t}_{0}e^{-c (t-s)}\bigl(t-s\bigr)^{\beta -1}L_{f}(s)\| u(s)-v(s) \|\, ds,
\quad t\geq 0.
\end{align*}
This implies by the boundedness of function $ s\mapsto e^{-c (t-s)}L(s),$ $s\in (0,t]$ 
and \cite[Lemma 6.19, p. 111]{Diet} that $ u(s)=v(s)$  for all $s\in [ 0,t]$  ($t>0$ fixed). The proof of the theorem is thereby complete.
\end{proof}

Using Proposition \ref{bibl-frimaonji} in place of Proposition \ref{bibl}, we can simply formulate and prove the following analogue of Theorem \ref{stepa} in the case of consideration of classical Lipschitz condition \eqref{vbnmp-frim}:

\begin{thm}\label{stepa-frimex}
Let $I =[0,\infty).$
Suppose that the following conditions hold:
\begin{itemize}
\item[(i)] $g \in APS^{p}(I \times X : X)  $ with  $p \geq 1, $ and there exists a constant $L>0$ such that \eqref{vbnmp-frim} holds with the function $f(\cdot, \cdot)  $ replaced by the function $g(\cdot, \cdot)  $ therein.
\item[(ii)] $f(t,x)=g(t,x)+q(t,x)$ for all $t\geq 0$ and $x\in X,$ where $\hat{q}\in C_{0}(I \times X : L^{p}([0,1]:X)).$
\item[(iii)] 
$\beta=1,\mbox{ provided }p=1 \mbox{ and }\frac{p}{p-1}<\frac{1}{1-\beta},\mbox{ provided }p>1.$
\item[(iv)] 
\emph{\eqref{vbnmp}} holds for a.e. $t>0$, with  locally bounded positive function $L_{f}(\cdot)$ satisfying
$M_{n}<1$ for some $n\in {\mathbb N}.$
\end{itemize}
Then there exists a unique asymptotically almost periodic solution of inclusion $\emph{(DFP)}_{f,s}.$
\end{thm}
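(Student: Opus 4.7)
The plan is to mimic the proof of Theorem \ref{stepa} essentially verbatim, substituting the Lipschitz-based composition Proposition \ref{bibl-frimaonji} in place of Proposition \ref{bibl} and adjusting the integrability index from $q = pr/(p+r)$ down to $p$.

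First, I would take an arbitrary $x \in AAP([0,\infty):X)$ and write $x = y + z$ with $y \in AP([0,\infty):X)$ and $z \in C_{0}([0,\infty):X)$; this forces $y \in APS^{p}$ and $\hat{z} \in C_{0}([0,\infty) : L^{p}([0,1]:X))$ trivially, and the range of $x$ (resp.\ $y$) is relatively compact in $X$ by \cite[Theorem 2.4]{zhang-c-prim}. Together with hypotheses (i)--(ii), this lets me invoke Proposition \ref{bibl-frimaonji} to conclude $f(\cdot, x(\cdot)) \in AAPS^{p}([0,\infty):X)$. Next I would verify that condition (iii) here is precisely condition \eqref{pianino} of Lemma \ref{andrade} with $q = p$ (so $q' = p/(p-1)$ when $p > 1$, or $\beta = 1$ when $p = 1$); the asymptotic estimate $\|T(t)\| = O(e^{-ct} t^{\beta - 1})$ coming from (P) then guarantees that Lemma \ref{andrade} applies to the convolution integral. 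Combined with $\lim_{t \to \infty} T(t)u_{0} = 0$ (which holds since $u_{0}$ is a continuity point of the semigroup), this shows that $\Upsilon$ maps $AAP([0,\infty):X)$ into itself.

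For existence, I would invoke the iterated estimate \eqref{storn}, which depends only on the pointwise Lipschitz bound $L_{f}(\cdot)$ and not on the Stepanov machinery; hypothesis (iv) provides an $n \in \mathbb{N}$ with $M_{n} < 1$, so that $\Upsilon^{n}$ is a strict contraction on the Banach space $AAP([0,\infty):X) \subseteq BUC([0,\infty):X)$, and the standard extension of the Banach fixed point theorem yields a unique fixed point $u$ of $\Upsilon$, which is the desired mild solution. Uniqueness of mild solutions among all of $C([0,\infty):X)$ follows from the Gronwall-type inequality
\begin{equation*}
\|u(t) - v(t)\| \leq M \int_{0}^{t} e^{-c(t-s)}(t-s)^{\beta - 1} L_{f}(s)\, \|u(s) - v(s)\|\, ds, \quad t \geq 0,
\end{equation*}
combined with the local boundedness of $L_{f}$ and \cite[Lemma 6.19, p.~111]{Diet}, exactly as in the proof of Theorem \ref{stepa}.

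I do not expect any serious obstacle: once the correct ingredients are identified, the proof is a mechanical transcription. The only mildly delicate point is matching the index $q$ in Lemma \ref{andrade} to the value $p$ here, which is precisely what condition (iii) is designed to ensure; the rest, including the contraction argument and the Gronwall-type uniqueness, is identical to Theorem \ref{stepa}. For this reason I would simply write: \emph{``The proof is completely analogous to that of Theorem \ref{stepa}, with Proposition \ref{bibl-frimaonji} applied in place of Proposition \ref{bibl}.''}
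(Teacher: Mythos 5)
Your proposal is correct and follows exactly the route the paper itself indicates: the paper gives no separate proof of Theorem \ref{stepa-frimex}, stating only that it is obtained from the proof of Theorem \ref{stepa} by using Proposition \ref{bibl-frimaonji} in place of Proposition \ref{bibl}, which is precisely what you carry out (including the correct matching of condition (iii) with \eqref{pianino} for $q=p$, the contraction argument via \eqref{storn}, and the Gronwall-type uniqueness step).
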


Now we would like to formulate the following important consequence of Theorem \ref{stepa-frimex}:

\begin{cor}\label{kretinjo}
Suppose that $I=[0,\infty),$ the function $f(\cdot,\cdot)$ is asymptotically almost periodic and \emph{\eqref{vbnmp}} holds for a.e. $t>0$, with  locally bounded positive function $L_{f}(\cdot)$ satisfying
$M_{n}<1$ for some $n\in {\mathbb N}.$ Then there exists a unique asymptotically almost periodic solution of inclusion $\emph{(DFP)}_{f,s}.$
\end{cor}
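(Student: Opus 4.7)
The plan is to follow the scheme of Theorem \ref{stepa-frimex}, but with one simplification: because $f$ is already asymptotically almost periodic (and not merely Stepanov asymptotically almost periodic), the compositional step becomes trivial and we never need to invoke the Stepanov composition results Proposition \ref{bibl} or Proposition \ref{bibl-frimaonji}. In particular, the Lipschitz assumption on the almost periodic part $g$ required in hypothesis (i) of Theorem \ref{stepa-frimex} is bypassed altogether.

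For any fixed $u \in AAP([0,\infty) : X)$, Lemma \ref{suljpa}(ii) yields directly that $t \mapsto f(t, u(t))$ belongs to $AAP([0,\infty) : X)$, and in particular it is asymptotically Stepanov $q$-almost periodic for every $q \in [1, \infty)$. I then choose the exponent $q$ so that condition \eqref{pianino} is satisfied: take $q = 1$ in case $\beta = 1$, and any $q > 1/\beta$ in case $\beta \in (0, 1)$. Such a choice always exists because $\beta > 0$ by condition (P). Lemma \ref{andrade} then gives that $t \mapsto \int_{0}^{t} T(t-s) f(s, u(s)) \, ds$ lies in $AAP([0, \infty) : X)$. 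Since $u_0$ is a point of continuity of $(T(t))_{t>0}$ and $\|T(t) u_0\| \leq M e^{-ct} t^{\beta - 1}\|u_0\|$, the map $t \mapsto T(t) u_0$ belongs to $C_0([0, \infty) : X) \subseteq AAP([0, \infty) : X)$. Summing the two terms shows that $\Upsilon$ leaves $AAP([0, \infty) : X)$ invariant.

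The contraction step is then identical to that used for Theorem \ref{stepa}: the estimate \eqref{storn}, combined with the hypothesis $M_n < 1$ for some $n \in \mathbb{N}$, shows that the $n$-th iterate of $\Upsilon$ is a strict contraction on the closed subspace $AAP([0, \infty) : X)$ of $BUC([0, \infty) : X)$, and a standard generalization of the Banach fixed point theorem produces a unique fixed point in $AAP([0, \infty) : X)$. Uniqueness inside the larger class of bounded continuous mild solutions is obtained from the Gronwall-type argument at the end of the proof of Theorem \ref{stepa}, using the local boundedness of $L_f(\cdot)$.

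The only real obstacle is ensuring that an exponent $q$ compatible with \eqref{pianino} can always be selected; this is handled uniformly by the observation $\beta > 0$. Everything else is a direct application of previously established lemmas, and no new composition principle is needed because $f \in AAP([0, \infty) \times X : X)$ is already enough for Lemma \ref{suljpa}(ii) to deliver the composition $f(\cdot, u(\cdot))$ in $AAP([0, \infty) : X)$.
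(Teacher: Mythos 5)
Your argument is correct, but it does not follow the paper's route. The paper gives no separate proof of Corollary \ref{kretinjo}: it is presented as an immediate specialization of Theorem \ref{stepa-frimex}, i.e.\ one is meant to split $f=g+q$ into its almost periodic and $C_{0}$ parts and feed these into that theorem, whose proof runs through the Stepanov composition principle of Proposition \ref{bibl-frimaonji}. You bypass the Stepanov machinery altogether: since $f\in AAP([0,\infty)\times X:X)$, Lemma \ref{suljpa}(ii) already yields $f(\cdot,u(\cdot))\in AAP([0,\infty):X)$ for each $u\in AAP([0,\infty):X)$, and the inclusion of $AAP$ into $AAPS^{q}$ for every $q\in[1,\infty)$ lets you choose $q=1$ when $\beta=1$ and any $q>1/\beta$ when $\beta\in(0,1)$, which is exactly what \eqref{pianino} requires for Lemma \ref{andrade}; the contraction step via \eqref{storn} and the Gronwall uniqueness argument are then identical to those in Theorem \ref{stepa}. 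This alternative is not only legitimate but slightly more robust: a literal reduction to Theorem \ref{stepa-frimex} would require verifying the global Lipschitz bound \eqref{vbnmp-frim} for the almost periodic part $g$ of $f$ (hypothesis (i) there), which does not obviously follow from the corollary's assumptions, where only \eqref{vbnmp} with a locally bounded $L_{f}$ is imposed on $f$ itself; your use of Lemma \ref{suljpa}(ii) needs no Lipschitz control at the composition stage, and your free choice of the exponent $q$ replaces hypothesis (iii) of the theorem. The only point worth stating explicitly is that $AAP([0,\infty):X)$ is a closed subspace of $BUC([0,\infty):X)$, so that the generalized contraction principle applies on it.
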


Especially, in the case that $M_{1}<1$ in Corollary \ref{kretinjo}, we obtain the following corollary:

\begin{cor}\label{kret}
Suppose that $I=[0,\infty),$ the function $f(\cdot,\cdot)$ is asymptotically almost periodic and \emph{\eqref{vbnmp-frim}} holds for some $L\in [ 0, c^{\beta}M^{-1}\Gamma(\beta)^{-1} ).$ Then there exists a unique asymptotically almost periodic solution of inclusion $\emph{(DFP)}_{f,s}.$
\end{cor}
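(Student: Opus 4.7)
The plan is to derive Corollary \ref{kret} from Corollary \ref{kretinjo} by showing that the hypothesis $L\in [0, c^{\beta}M^{-1}\Gamma(\beta)^{-1})$ forces $M_{1}<1$ (so one can take $n=1$), while all other hypotheses of Corollary \ref{kretinjo} are either identical or follow trivially from those stated here.

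First I would observe that the global Lipschitz condition \eqref{vbnmp-frim} is the special case of \eqref{vbnmp} with $L_{f}(\cdot)\equiv L,$ which is certainly a locally bounded positive function (indeed a constant). Hence the Lipschitz part of the hypothesis of Corollary \ref{kretinjo} is satisfied; what remains is to check that $M_{n}<1$ for some $n\in {\mathbb N},$ and as indicated we aim at $n=1.$

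Next I would compute $M_{1}$ explicitly. With $L_{f}\equiv L$ the definition of $M_{n}$ specializes to
\begin{align*}
M_{1}=ML\sup_{t\geq 0}e^{-ct}\int_{0}^{t}e^{cx_{1}}(t-x_{1})^{\beta-1}\,dx_{1}.
\end{align*}
Substituting $u=t-x_{1}$ in the inner integral gives
\begin{align*}
e^{-ct}\int_{0}^{t}e^{cx_{1}}(t-x_{1})^{\beta-1}\,dx_{1}=\int_{0}^{t}e^{-cu}u^{\beta-1}\,du,
\end{align*}
which is monotonically increasing in $t$ and bounded above, as $t\to\infty,$ by the Gamma integral $\int_{0}^{\infty}e^{-cu}u^{\beta-1}\,du=c^{-\beta}\Gamma(\beta).$ Therefore $M_{1}\leq ML\,c^{-\beta}\Gamma(\beta),$ and the hypothesis $L<c^{\beta}M^{-1}\Gamma(\beta)^{-1}$ yields $M_{1}<1.$

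Finally, with $M_{1}<1$ in hand, Corollary \ref{kretinjo} applies directly and produces the unique asymptotically almost periodic mild solution of $(DFP)_{f,s}.$ No step here should present a genuine obstacle; the only mild subtlety is the computation of the supremum defining $M_{1},$ where one has to recognize that the Laplace-type integral $\int_{0}^{t}e^{-cu}u^{\beta-1}\,du$ increases to $c^{-\beta}\Gamma(\beta)$ and thereby use the monotone convergence to pass to the sup.
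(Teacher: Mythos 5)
Your proposal is correct and follows exactly the route the paper intends: the text introducing Corollary \ref{kret} states that it is the case $M_{1}<1$ of Corollary \ref{kretinjo}, and your computation $M_{1}=ML\sup_{t\geq 0}\int_{0}^{t}e^{-cu}u^{\beta-1}\,du=MLc^{-\beta}\Gamma(\beta)$ is precisely the (omitted) verification that $L<c^{\beta}M^{-1}\Gamma(\beta)^{-1}$ forces $M_{1}<1$. Nothing further is needed.
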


\begin{rem}\label{brazilci}
\begin{itemize}
\item[(i)] In the case that $\beta=1$ and $L_{f} \in L^{\infty}([0,\infty)) \cap  L^{1}([0,\infty)),$ the proof of \cite[Theorem 4.4]{brazil} shows that $\sum_{n=1}^{\infty}M_{n}<\infty,$ so that the uniqueness of solutions follows immediately by applying the Weissinger's fixed point theorem \cite[Theorem D.7]{Diet}. If the above conditions are satisfied, then the proof of Theorem \ref{stepa} can be used to state a proper extension of \cite[Theorem 4.4]{brazil}; speaking-matter-of-factly, in our approach the term $f (\cdot,u(\cdot))$ need not be 
asymptotically almost periodic and it can be of the form (iii) from the formulation of Theorem \ref{stepa}, or asymptotically Stepanov almost periodic if we consider Theorem \ref{stepa-frimex}.
Applications in the study of abstract semilinear Cauchy problems of third order:
\begin{align}\label{brafo}
\alpha u^{\prime \prime \prime}(t) + u^{\prime\prime}
(t)-\beta Au(t)-\gamma Au^{\prime}
(t) = f (t,u(t)),\quad \alpha, \ \beta, \ \gamma >  0,\ t\geq 0,
\end{align}
appearing in the theory of dynamics of elastic vibrations of flexible structures \cite{brazil}, are immediate. 
\item[(ii)] If $0<\beta<1$, then it is not trvial to state a satisfactory criterion which would enable one to see that the inequality
$M_{n}<1$ holds for some integer $n\in {\mathbb N}.$
\end{itemize}
\end{rem}

As already mentioned, it seems that the assertions of Theorem \ref{zeljezo}-Theorem \ref{stepa-frimex} are new even for 
non-degenerate semilinear differential equations with almost sectorial operators. Here we will remind ourselves of the following important result of W. von Wahl \cite{wolf}, which is most commonly used for applications
in the existing literature: 

\begin{example}\label{merkle-nedegenerisane} 
Assume that $\alpha\in (0,1),$ $m\in {\mathbb N},$
$\Omega$ is a bounded domain in ${{\mathbb R}^{n}}$ with boundary of
class $C^{4m}$ and $X:=C^{\alpha}(\overline{\Omega}).$ Define
the operator $A : D(A)\subseteq C^{\alpha}(\overline{\Omega})
\rightarrow C^{\alpha}(\overline{\Omega})$ by $D(A):=\{u\in C^{2m+\alpha}(\overline{\Omega}) :
D^{\beta}u_{|\partial \Omega}=0\mbox{ for all }|\beta|\leq m-1\}$ and
$$
Au(x):=\sum \limits_{|\beta|\leq 2m}a_{\beta}(x)D^{\beta}u(x) \mbox{
for all }x\in \overline{\Omega}.
$$
Here, $\beta \in {{\mathbb N}_{0}^{n}},$
$|\beta|=\sum_{i=1}^{n}\beta_{j},$
$D^{\beta}=\prod_{i=1}^{n}(\frac{1}{i}\frac{\partial}{\partial
x_{i}})^{\beta_{i}},$ and $a_{\beta} : \overline{\Omega} \rightarrow
{\mathbb C}$ satisfy the following conditions:
\begin{itemize}
\item[(i)] $a_{\beta}(x)\in {\mathbb R}$ for all $x\in
\overline{\Omega}$ and $|\beta|=2m.$
\item[(ii)] $a_{\beta}\in C^{\alpha}(\overline{\Omega})$ for all
$|\beta|\leq 2m,$ and
\item[(iii)] there is a constant $M>0$ such that
$$
M^{-1}|\xi|^{2m}\leq \sum
\limits_{|\beta|=2m}a_{\beta}(x)\xi^{\beta}\leq M|\xi|^{2m}\mbox{
for all }\xi \in {{\mathbb R}^{n}}\mbox{ and }x\in
\overline{\Omega}.
$$
\end{itemize}
Then there exists a sufficiently large number $\sigma>0$ such that the single-valued
operator ${\mathcal A}\equiv -(A+\sigma)$ satisfies the condition (P) with $\beta=1-\frac{\alpha}{2m}$ and some finite constants $c,\ M>0$ 
(recall that ${\mathcal A}$ is not densely defined and that the value of exponent $\beta$
in (P) is sharp). 
\end{example}

Concerning semilinear differential inclusions of first order, we would like to present the following illustrative example:

\begin{example}\label{markec} (A. Favini, A. Yagi \cite[Example 3.6]{faviniyagi}) 
Let $\Omega$ be a bounded domain in ${\mathbb R}^{n},$ $b>0,$ $m(x)\geq 0$ a.e. $x\in \Omega$, $m\in L^{\infty}(\Omega),$ $1<p<\infty$ and $X:=L^{p}(\Omega).$
Suppose that the operator $A:=\Delta -b $ acts on $X$ with the Dirichlet boundary conditions, and
that $B$ is the multiplication operator by the function $m(x).$
Then we know that the multivalued linear operator
${\mathcal A}:=AB^{-1}$ satisfies the condition (P) with $\beta=1/p$ and some finite constants $c,\ M>0;$ 
recall also that the validity of
additional condition \cite[(3.42)]{faviniyagi} on the function $m(x)$ enables us to get the better exponent $\beta$ in (P), provided that $p>2.$ Now it becomes clear how we can apply Theorem \ref{zeljezo}-Theorem \ref{zeljeznica} in the study of existence and uniqueness of almost periodic solutions of semilinear Poisson heat equation
\[\left\{
\begin{array}{l}
\frac{\partial}{\partial x}[m(x)v(t,x)]=(\Delta -b )v(t,x) +f(t,m(x)v(t,x)),\quad t\in {\mathbb R},\ x\in {\Omega};\\
v(t,x)=0,\quad (t,x)\in [0,\infty) \times \partial \Omega ,\\
\end{array}
\right.
\]
and how we can appply Theorem \ref{stepa}-Theorem \ref{stepa-frimex} in the study of existence and uniqueness of asymptotically almost periodic solutions of semilinear Poisson heat equation
\[\left\{
\begin{array}{l}
\frac{\partial}{\partial x}[m(x)v(t,x)]=(\Delta -b )v(t,x) +f(t,m(x)v(t,x)),\quad t\geq 0,\ x\in {\Omega};\\
v(t,x)=0,\quad (t,x)\in [0,\infty) \times \partial \Omega ,\\
 m(x)v(0,x)=u_{0}(x),\quad x\in {\Omega}
\end{array}
\right.
\]
in the space  $X,$ by using the substitution $u(t,x)=m(x)v(t,x)$ and passing to the corresponding semilinear differential inclusions of first order.
\end{example}

Observe, finally, that B. de Andrade and C. Lizama \cite[Theorem 4.7]{brazil} (cf. also R. Agarwal, B. de Andrade and C. Cuevas \cite[Theorem 3.5]{ravi}) 
have applied the Leray-Schauder alternative for proving the existence of asymptotically almost periodic solutions of abstract non-degenerate third order semilinear Cauchy problem \eqref{brafo}. The argumentation contained in the proof of this theorem can be applied in the analysis of existence of asymptotically almost periodic solutions for a large class of related semilinear Cauchy problems whose solutions are governed by exponentially decaying degenerate operator families that are strongly continuous for $t\geq 0$ (some examples of such operator families can be found in \cite[Chapter II]{faviniyagi}).

\end{document}